\documentclass[a4paper,oneside,english,reqno,12pt]{amsart}
\usepackage[utf8]{inputenc}
\usepackage[T1]{fontenc}
\usepackage{graphicx}
\DeclareFontFamily{OMX}{mlmex}{}
\DeclareFontShape{OMX}{mlmex}{m}{n}{%
   <->mlmex10%
   }{}%
\usepackage{mlmodern}

\usepackage[hscale=0.666, vscale=0.72]{geometry}
\usepackage{babel}
\usepackage[numbers,sort&compress]{natbib}

\usepackage[np,autolanguage]{numprint}
\usepackage{hyperref}
\hypersetup{%
  colorlinks=true,%
  citecolor=[RGB]{120,29,126},%
  pdfauthor={Jean-François Burnol},%
  pdfsubject={Block-count constrained harmonic sums},%
  pdfstartview=FitH,%
  pdfpagemode=UseNone,%
}

\usepackage{amsmath,amsthm,amssymb}
\usepackage{mathtools}

\DeclarePairedDelimiterX\Iffint[2]{\lbrack\!\lbrack}{\rbrack\!\rbrack}{#1\dots#2}
\DeclarePairedDelimiterX\Ioo[2]{\lparen}{\rparen}{#1,#2}
\DeclarePairedDelimiterX\Iof[2]{\lparen}{\rbrack}{#1,#2}
\DeclarePairedDelimiterX\Ifo[2]{\lbrack}{\rparen}{#1,#2}
\DeclarePairedDelimiterX\Iff[2]{\lbrack}{\rbrack}{#1,#2}

\DeclarePairedDelimiterX\scalp[2]{\langle}{\rangle}{#1\mid#2}

\newcommand\emptyword{\epsilon}

\newcommand\ZZ{\mathbb{Z}}

\newcommand\Un{\mathbf{1}}

\DeclareMathOperator\Leb{Leb}

\DeclareMathOperator\Cyl{Cyl}

\DeclareMathOperator\cB{\mathcal{B}}
\DeclareMathOperator\cC{\mathcal{C}}

\DeclareMathOperator\cF{\mathcal{F}}
\DeclareMathOperator\cG{\mathcal{G}}

\DeclareMathOperator\cH{\mathcal{H}}

\DeclareMathOperator\cL{\mathcal{L}}
\DeclareMathOperator\cR{\mathcal{R}}

\DeclareMathOperator\cW{\mathcal{W}}

\DeclareMathOperator\wH{\widehat{H}}
\DeclareMathOperator\wM{\widehat{M}}

\newcommand\wmu{\widehat{\mu}}
\newcommand\weta{\widehat{\eta}}

\DeclareMathOperator\sD{\mathsf{D}}
\DeclareMathOperator\sE{\mathsf{E}}

\DeclareMathOperator\sK{\mathsf{K}}

\DeclareMathOperator\sT{\mathsf{T}}

\newcommand\dmu{\mathrm{d}\mu}
\newcommand\deta{\mathrm{d}\eta}
\newcommand\dnu{\mathrm{d}\nu}

\newcommand\rM{\mathrm{M}}
\newcommand\rH{\mathrm{H}}

\newcommand\fL{\mathfrak{L}}
\newcommand\fR{\mathfrak{R}}

\theoremstyle{plain}
\newtheorem{theo}{Theorem}
\newtheorem{prop}{Proposition}

\theoremstyle{definition}

\allowdisplaybreaks

\title{The modal expansion of Kempner sums}

\author[J.-F. Burnol]{Jean-François Burnol}

\date{25 September 2026.}

\subjclass[2020]{Primary 11A63, 47A70, 60C05; Secondary 05A15, 11Y60, 41A58, 46B15, 68R15}
\keywords{Kempner sums, combinatorics on words, return and overlap operators, block-directed Euler--Maclaurin}

\usepackage{setspace}
\newcommand\arxivurl[1]{\href{https://arxiv.org/abs/#1}{\textsf{arXiv:#1}}}

\begin{document}
\addtocontents{toc}{\protect\hypersetup{hidelinks}}

\begin{abstract}
  This is a continuation of the author's earlier paper \emph{Block-count
    constrained harmonic sums: spectral expansion and block-directed
    Euler--Maclaurin}.  We presented in that
  reference the \emph{modal expansion} of block-Irwin sums $I(b,w,k)$ for
  $k\geq1$.  The case $k=0$ was fully developed when $w$ is a single digit,
  but it was presented only as a very short result summary for $w$ a multi-digit
  block.  Although the framework is already completely exposed in the earlier
  paper, the case $k=0$ requires adding a separate, dedicated discussion, which is
  provided here.
\end{abstract}

\maketitle

\tableofcontents

\onehalfspacing

\setcounter{part}{3}
\setcounter{section}{35}
\setcounter{equation}{502}
\setcounter{theo}{4}
\setcounter{prop}{21}

\part{The case \texorpdfstring{$k=0$}{k=0}}

\section{Introduction}

This paper is a direct continuation to \cite[Part 2]{burnolmodal}.  It
provides (in its last four sections) the detailed explanations for the results
summarized in a very condensed manner there in section~32, and, prior
to that, quite more.  The rationale for including only a summary in
\cite{burnolmodal} resides in the size which was already attained at the end
of its Part~2, and in the author's resolve to have enough room for a fully
developed Part~3 on the block-directed Euler--Maclaurin idea.

I thank Thomas Schmelzer for sending me a manuscript \cite{schmelzer2026},
which had been completed at about the same time as my own \cite{burnolmodal}.
There is some common material, but the respective aims are distinct enough,
and the points of overlap few enough, that a detailed comparison is not
needed here: roughly, Schmelzer's paper is concerned with uniform estimates
and asymptotic classification, and uses the prefix automaton in an essential
way, whereas the perspective of both \cite{burnolmodal} and the present paper
is principally that of the modal expansion. The prefix automaton
is described in \cite{burnolmodal}, but is not used to derive the modal
expansion.  In \cite[Part~1]{burnolmodal}, we did
devote some space to inequalities but only in the $|w|=1$ case.

There is one overlap I need to mention here. After finishing this paper I read
Schmelzer's more closely and recognized in his Lemma~5(a) the same statement
about bordered words as we will establish below: if a bordered word has length
$p$ and shortest positive period $p_0$, every overlap period not exceeding
$p-p_0$ is a multiple of $p_0$. This result is an easy consequence of the GCD
rule mentioned in \cite{guibasodlyzko1981a, guibasodlyzko1981b}.  Here it
forms part of a fuller discussion of the occurrence-free language partition
one encounters (tacitly) in \cite{guibasodlyzko1981a, guibasodlyzko1981b}.  We
give more details than strictly necessary for our other goals, mainly to
compensate for an error in an argument in this area, which was made by the
author in \cite{burnolblocks}.

\begin{footnotesize}\singlespacing
  Here is a
  list of currently known typographical issues in \arxivurl{2609.17045v1}
  \begin{enumerate}
  \item In Equation (120), $a \in igma_b$ should read $a\in \Sigma_b$,
  \item In the paragraph introduction Equation (210), a reference is made
    to \cite[\S6.4]{odlyzko1995}.  It should have been \cite[Example~6.4]{odlyzko1995}.
  \item In Equation (220), the notation $\ell^-$ was left undefined and should
    have been $\ell[{:}{-}1]$, as explained in the paragraph preceding that
    equation.  Furthermore, in that paragraph the \TeX{} mark-up was faulty
    and $g[:-1]$ should have rendered as $g[{:}{-}1]$,
  \item In Part~1, stopping times $\tau_k$ are defined as the location of the
    $(k+1)$-st occurrence of the digit $d$.  But in Part~2, in
    Equations~(271)--(278), there is a shift by one in the index, $\tau_k$ is
    now the location of the last digit of $w$ on its $k$-th occurrence. So the
    $\tau_k$ of Part~2 is the $\tau_{k-1}$ from Part~1.    Besides, the notation
    $\tau_w$ is used for $|w|$ if $w$ is unbordered and for its shortest
    positive period if it is bordered, hence a clash of notation with stopping
    times from Part~1.
  \item The sentence after Equation (285) starting with ``Equivalently,
    $D^m\nu_{m+1}$'' leaves it to the reader to remember, or infer from
    Equation~(285), that the symbol $D$ is the distributional differentiation,
    which was defined as such long ago in Part~1.  And Part~2 uses
    frequently the letter $D$ also for a certain polynomial.
  \end{enumerate}
\end{footnotesize}
\par

For notation not redefined here, refer to \cite{burnolmodal}.  Matters of
words and their borders will be reviewed in depth in section~\ref{sec:Fpartition}.
Recall that for $g$ a word with letters in the alphabet of digits $\Sigma_b$,
$x(g) = n(g)/b^{|g|}$, with $|g|$ the length of $g$, and $n(g)$ the
non-negative integer represented by $g$, with most significant digits on the
left in $g$.

\section{Review of the modal expansion for block-Irwin sums}

Part~2 contains in particular two separate, but related developments:
\begin{itemize}
\item The complete generalization of the earlier $p=1$ work \cite{burnolirwin}
  to the general multi-digit case. The numerical computation of $I(b,w,k)$ for
  any $k\geq0$ is achieved via series which can be chosen to have (within
  practical limits, as there is a trade-off with the number of series used)
  arbitrarily fast geometric convergence.  This follows from the
  ``level-raising'' method, combined with triangular linear schemes for the
  computation of the moments of auxiliary (discrete) measures $\eta_j$, $j\leq
  k$.  The block-Irwin sums themselves are directly related to discrete
  measures $\mu_k$, $k\geq0$, defined in \cite{burnolirwin, burnolblocks}.
\item The \emph{modal expansion} from \cite[Th.\@ 3]{burnolmodal} for
  $k\geq1$:
\begin{equation}\label{eq:modalIrwin}
  I(b,w,k) = \sum_{j=1}^\infty \fL_j(b,w)\lambda_j(b,w)^{k-1}\fR_j(b,w).
\end{equation}
Its roots are in the language
  factorization $\cW_k=\cL \cG^{k-1}\cR$. Here:
\begin{itemize}
\item $\cW_k$ is the language of words with exactly $k$
  occurrences of $w$,
\item $\cL$ is the \emph{initial language}, i.e.\@ the
  language of words containing exactly one occurrence of the word $w$, which is
  terminal,
\item $\cR$ is the \emph{tail language}, which contains the words
  which, if appended immediately after a terminal occurrence of $w$,
  do not add any new occurrence,
\item and $\cG$ is the \emph{next-return} or \emph{renewal} language, which
  contains the words $g$, such that $wg$ contains exactly two occurrences of
  $w$, the second one being terminal, and $\lambda_j(b,w) = \sum_{g\in \cG}
  b^{-j |g|}$.
\end{itemize}
\end{itemize}
The link between the two is that,
defining the \emph{return operator} $K_w$ acting on Borel measures (or even
distributions) on $\Iff01$ via
\begin{equation}
  K_w(\nu) = \sum_{g\in \cG} b^{-|g|}(\phi_g)_* \nu,
\end{equation}
where $\phi_g(t) = x(g) + b^{-|g|}t$ is the affine map associated with the word $g$, one has the renewal identity:
\begin{equation}
  \eta_{k+1} = K_w(\eta_k).
\end{equation}
The restriction of $\phi_g$ to the half-open interval $\Ifo01$ establishes a
bijection with the cylinder $\Cyl_g$ defined by $g$.

The quantities $\lambda_j(b,w)$, abridged from here on to $\lambda_j(w)$, are
the eigenvalues of $K_w$. But measures are not enough: the eigenvectors for
$j>1$ are distributional derivatives of singular measures.  Probability
measures $\nu_{j}$ are defined on $\Iff01$, with $\nu_1=\Leb$, and are such
that the distributional derivatives
$\sigma_{j}=(-1)^{j-1}\nu_{j}^{(j-1)}/(j-1)!$ are the (normalized)
eigenvectors of $K_w$.  Transposing the operator to act on functions, one
obtains:
\begin{equation}
  (\sK_w f)(x) = \sum_{g\in\cG} b^{-|g|}f(\phi_g(x)).
\end{equation}
The action is triangular on the polynomial algebra and has monic
eigenpolynomials $\cB_{m}$, $m\geq0$, there, with eigenvalue
$\lambda_{m+1}(b,w)$, and which are biorthogonal to the eigendistributions.
These two return operators $K_w$ and $\sK_w$ act very
differently when applied both to a continuous function $\phi$ on
$\Iff01$: $K_w$ places a copy of $\phi$ on each cylinder $\Cyl_g$, $g\in\cG$,
parameterized by $\Ifo01$ via $\phi_g$.

There holds
\begin{equation}
  \lambda_j(w) = G(b^{-j})\qquad
  G(t) = \frac{E(t)}{D(t)} = \frac{(1-bt)(C(t)-1)+t^p}{(1 -bt)C(t) + t^p}\;,
\end{equation}
where $C(t)$ is the Guibas-Odlyzko auto-correlation polynomial
\cite{guibasodlyzko1981b}.  With our notation
\begin{equation}\label{eq:Ct}
  C(t) = 1 + \sum_{c\in \cC_w^+} t^{|c|}\,,
\end{equation}
and $\cC_w^+$ is the set of \emph{complements} of $w$, i.e.\@ the complements in
$w$ of its borders.  We define again in the sequel all the needed material on
words and their borders. Words of length $1$ have no borders.

Only the products $\fL_j(b,w)\fR_j(b,w)$ are uniquely determined by the
$I(b,w,k)$ values, $k\geq1$.  The individual factors originate in the method
of proof, which goes via the block-directed Euler--Maclaurin expansion
associated with $w$ (\cite[Th.\@ 4]{burnolmodal}), as applied to the
meromorphic function
\begin{equation}
  \label{eq:fL}
  f_{\cL}(z) = \sum_{\substack{g\in \cL\\g_1>0}}\frac1{n(g) + z}\;.
\end{equation}
See \cite[Eq.\@ 335]{burnolmodal}; $g_1$ is the first digit of $g$.

The application to $f_{\cL}$ of the block-directed Euler--Maclaurin expansion reads
\begin{equation}
  \label{eq:block-EML}
    f_{\cL}(z) = \sum_{m=0}^\infty
   \Bigl((m!)^{-1}\int_0^1 f_{\cL}^{(m)}(t)\dnu_{m+1}(t)\Bigr) \cB_m(z)\;.
\end{equation}
The left modal coefficients $\fL_j(b,w)$ are the above coefficients
$\sigma_j(f_{\cL})$, $j=m+1$.  The right modal coefficients are obtained by
integrating the eigenpolynomials against the measure $\eta_0$:
\begin{equation}
  \fR_j(b,w)=\int_{\Ifo01}\cB_{j-1}(x)\deta_0(x).
\end{equation}
The measure $\eta_0$ is directly
associated with the tail language: $\eta_0 = \sum_{g\in \cR}
b^{-|g|}\delta_{x(g)}$. One has $\fR_1(b,w)=\eta_0(\Iff01)=b^p$.

\section{A modal expansion for block-Kempner sums}

For $k=0$, the language of interest is $\cF$, the occurrence-free language, which
is directly associated with the measure $\mu_0 = \sum_{g\in \cF}
b^{-|g|}\delta_{x(g)}$. 
Keeping close with the original approach in the one-digit case in \cite{burnolkempner}, we first replace the expression
\begin{equation}
  I(b,w,0) = \int_{\Ifo{b^{-1}}{1}}\frac{\dmu_0(x)}{x}\;,
\end{equation}
with an integration on the full (half-open, but this is done only if in future
we extend the measure beyond their original home in $\Ifo01$) interval:
\begin{equation}\label{eq:Ibw0}
  I(b,w,0)=\sum_{a=1}^{b-1}\int_{\Ifo01}\frac{\dmu_0(x)}{a+x}
  -\Un_{\{w_1\neq0\}}(w)\int_{\Ifo01}\frac{\deta_0(x)}{n(w)+x}.
\end{equation}
This follows from Equation~(358) of Part~2, taken with $y=0$.

We want to apply the block-directed Euler--Maclaurin to the involved kernels,
but there arises an issue with $1/(1+x)$ having a pole at $-1$. This is a
problem only if $w=0^p$. Indeed, recall from Part~2 that all poles should be
outside the smallest closed disk centered at $x_w$ and containing $\Iff01$.  Here,
$x_w=0.g_wg_wg_w\dots$ is the fixed-point of the affine map
$\phi_{g_w}$, where $g_w$ is, if $w$ is unbordered, $w$ itself, and, if
not, the shortest complement of a border (its length is the smallest overlap
period of $w$).  And $x_w=0$ happens  exactly when $w=0^p$.  We leave
this case aside for now.

Recall $A_j(n)=\int_0^1(n+x)^{-j}\dnu_j(x)$ and
$\fR_j(b,w)=\int_{\Ifo01}\cB_{j-1}(x)\deta_0(x)$, $j=m+1$, as in Part~2.
\begin{theo}[The modal expansion of block-Kempner sums]
  Let $w\neq 0^p$ be a word of length $p$. There holds
\begin{equation}\label{eq:modalKempner}
  \begin{split}
  I(b,w,0)=\sum_{m=0}^\infty(-1)^m
    \Bigl(&\sum_{a=1}^{b-1}A_{m+1}(a)\int_{\Ifo01}\cB_m(x)\dmu_0(x)
    \\&-\Un_{\{w_1\neq0\}}(w)A_{m+1}(n(w))\fR_{m+1}(b,w)\Bigr).
  \end{split}
\end{equation}
The series is geometrically convergent.
\end{theo}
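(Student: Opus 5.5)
Starting from \eqref{eq:Ibw0}, the plan is to apply the block-directed Euler--Maclaurin expansion of \cite[Th.\@ 4]{burnolmodal} to each kernel $f(x)=1/(n+x)$, with $n=a\in\Iffint1{b-1}$ and, when $w_1\neq0$, with $n=n(w)$. Then integrate the resulting polynomial series term by term against $\mu_0$ and against $\eta_0$.

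\textbf{Admissibility.} First we check that these kernels are admissible. The only pole of $1/(n+x)$ is at $-n\leq-1$. We need it to lie outside the smallest closed disk centered at $x_w$ containing $\Iff01$, whose radius is $\max(x_w,1-x_w)$. This requires $n+x_w>1-x_w$, i.e.\@ $x_w>(1-n)/2$. For $n\geq2$ this is automatic. For $n=1$ it is equivalent to $x_w>0$, and $x_w=0$ happens exactly when $w=0^p$, which is excluded by hypothesis. Since $n\geq1$, strict inequality holds, and this gives a uniform geometric margin.

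\textbf{Expansion and identification of coefficients.} For $f(x)=(n+x)^{-1}$ we have $f^{(m)}(t)/m!=(-1)^m(n+t)^{-m-1}$. The $m$-th coefficient in \eqref{eq:block-EML} is therefore $(-1)^mA_{m+1}(n)$, and
\[
\frac1{n+x}=\sum_{m=0}^\infty(-1)^mA_{m+1}(n)\cB_m(x),
\]
with convergence uniform on $\Iff01$ (indeed on a disk slightly larger than the one above) and geometric in $m$. Integrate this against the finite positive measures $\mu_0$ and $\eta_0$ on $\Ifo01$. Both are finite: $\mu_0$ has total mass $I(b,w,0)$-related finite mass (the occurrence-free language has growth rate below $b$), and $\eta_0$ has mass $b^p$. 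Uniform convergence then justifies exchanging sum and integral. The $\eta_0$ integrals produce exactly $\fR_{m+1}(b,w)$ by definition. Summing over $a$ and subtracting the $n(w)$ term, the identity \eqref{eq:modalKempner} follows from \eqref{eq:Ibw0}.

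\textbf{Geometric convergence and the main obstacle.} The main point to secure is a geometric bound on $|A_{m+1}(n)|\cdot\sup_{\Iff01}|\cB_m|$, since this yields geometric convergence of the final series. Such a bound is the content of the convergence part of the block-directed Euler--Maclaurin theorem in Part~2. There the coefficients decay like $\rho^{-m}$ times the growth of $\cB_m$ on the disk, with $\rho$ the distance ratio to the nearest pole. I would invoke that estimate directly, with the ratio $(n+x_w)/\max(x_w,1-x_w)>1$ computed above. The only care needed is to confirm that the estimate in Part~2 is stated as a bound on $\sup_{\Iff01}|\cB_m|\,|\sigma_{m+1}(f)|$, rather than merely as pointwise convergence. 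If it is only pointwise, I would redo the bound via Cauchy's formula on a circle centered at $x_w$ of radius between $\max(x_w,1-x_w)$ and $n+x_w$, using the known bound on the eigendistributions $\sigma_{m+1}$ from Part~2.
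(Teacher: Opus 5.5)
Your proposal is correct and follows the paper's proof. The paper likewise applies the block-directed Euler--Maclaurin theorem (Theorem~4 of Part~2) to each kernel $1/(n+x)$ in Equation~\eqref{eq:Ibw0}, with $x_w>0$ for $w\neq0^p$ keeping the poles safe. It then integrates termwise against $\mu_0$ and $\eta_0$, and gets geometric convergence from the uniform geometric convergence of each kernel expansion on $\Iff01$. Your admissibility check, coefficient identification and exchange-of-limits justification just make explicit what the paper leaves to the surrounding text; note that the total mass of $\mu_0$ is exactly $b^pC(b^{-1})$.
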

\begin{proof}
  This follows from Theorem~4 of \cite{burnolmodal} applied to the kernels
  $1/(n+x)$ arising in Equation~\eqref{eq:Ibw0}.  The series is geometrically
  convergent, because the kernel expansions each converge uniformly
  geometrically on $\Iff01$.
\end{proof}

Denote $\beta_{m+1}(b,w)$, $m\geq0$, the terms of this series.
The computation of the $A_{m+1}(n)$ and of the $\fR_{m+1}(b,w)$ is already
provided by Part~2.  We shall explain later how to handle numerically
the computation of
\begin{equation}
  \int_{\Ifo01}\cB_m(x)\dmu_0(x), 
\end{equation}
for $m\geq0$.  It is known from \cite{burnolblocks} (as a corollary of earlier
work by Guibas and Odlyzko \cite{guibasodlyzko1981b} on string overlaps and
the occurrence-free language) that the total mass of $\mu_0$ is
$b^pC(b^{-1})$.  So, as $\cB_0=1$ and $\nu_1=\Leb$, the first modal term is
\begin{equation}
  \beta_1(b,w)=b^pC(b^{-1})\log(b)
  -\Un_{\{w_1\neq0\}}(w)b^p\log\Bigl(1+\frac1{n(w)}\Bigr).
\end{equation}
It is exactly $b^pC(b^{-1})\log(b)$ for a word beginning with zero,
apart from the case $0^p$ which was provisionally set aside.

For $w=0^p$, we apply to Equation~\eqref{eq:Ibw0} one further level raising to
all the kernels $1/(a+x)$, $1\leq a<b$, obtaining new kernels $1/(ba+d+x)$,
$1\leq a<b$, $0\leq d<b$, together with the subtracted kernels
$1/(ab^p+x)$. All are safe for the block-directed Euler--Maclaurin and we obtain
\begin{prop}[The modal expansion for $w=0^p$]
For any $p\geq1$, there holds
  \begin{equation}\label{eq:modalKempnerzero}
    \begin{split}
      I(b,0^p,0)=\sum_{a=1}^{b-1}\frac1a+\sum_{m=0}^\infty(-1)^m
      \Bigl(&\sum_{a=b}^{b^2-1}A_{m+1}(a)\int_{\Ifo01}\cB_m(x)\dmu_0(x)
      \\&-\sum_{a=1}^{b-1}A_{m+1}(ab^p)\fR_{m+1}(b,0^p)\Bigr),
    \end{split}
  \end{equation}
with a geometrically convergent series.
\end{prop}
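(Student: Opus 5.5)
The plan is to first redo, for $w=0^p$, the combinatorial identity behind Equation~\eqref{eq:Ibw0} one level deeper, so that every kernel has its pole at distance greater than $1$ from $x_w=0$. Then Theorem~4 of \cite{burnolmodal} is applied term by term, exactly as in the proof of the theorem above.

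\emph{Step 1: the level-raised identity.} I would write $I(b,0^p,0)$ as a sum of $1/n$ over positive integers $n$ whose digit string $s$, with $s_1\geq1$, contains no occurrence of $0^p$.
\begin{itemize}
\item The one-digit strings $s=a$, $1\leq a<b$, give $\sum_{a=1}^{b-1}1/a$.
\item Any other string is $s=c\,g'$ with a two-digit prefix $c=ad$, where $b\leq c<b^2$ as an integer.
\end{itemize}
Since $a\neq0$, no occurrence of $0^p$ can involve $a$. Hence $s\in\cF$ if and only if $g'\in\cF$ and no occurrence of $0^p$ straddles the junction between $d$ and $g'$.

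Such a straddling occurrence requires $d=0$ and $g'$ to begin with $0^{p-1}$. I would check that the pairs $(c,g')$ with $g'\in\cF$ but $cg'\notin\cF$ are in bijection with the strings $a\,0^p h$ containing exactly one occurrence of $0^p$, namely the initial one. These are exactly the strings with $h\in\cR$, since the tail language $\cR$ is defined relative to a terminal occurrence of $w$. For $p=1$ the condition on $g'$ is vacuous, and the same description holds.

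Converting $1/n$ into integrals uses $1/(c\,b^{|g'|}+n(g'))=b^{-|g'|}/(c+x(g'))$, and likewise $1/n(a0^ph)=b^{-|h|}/(ab^p+x(h))$. This yields
\begin{equation*}
  I(b,0^p,0)=\sum_{a=1}^{b-1}\frac1a+\sum_{c=b}^{b^2-1}\int_{\Ifo01}\frac{\dmu_0(x)}{c+x}-\sum_{a=1}^{b-1}\int_{\Ifo01}\frac{\deta_0(x)}{ab^p+x}.
\end{equation*}
Equivalently, this is Equation~\eqref{eq:Ibw0} with each $\int\dmu_0/(a+x)$ decomposed by the self-similarity of $\mu_0$ under the maps $\phi_d$, where the subtracted piece is precisely the $\eta_0$-correction.

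\emph{Step 2: modal expansion of each kernel.} For $w=0^p$ one has $g_w=0$ and $x_w=0$. The smallest closed disk centered at $0$ containing $\Iff01$ is therefore the closed unit disk. The poles of the kernels lie at $-c\leq-b$ and at $-ab^p\leq-b$, so they are all outside this disk, and Theorem~4 of \cite{burnolmodal} applies. It gives, uniformly and geometrically on $\Iff01$,
\begin{equation*}
  \frac1{n+x}=\sum_{m\geq0}(-1)^mA_{m+1}(n)\cB_m(x).
\end{equation*}
Here the coefficient is $\sigma_{m+1}$ of the kernel, and the sign and factorial are absorbed exactly as in the proof of the theorem above.

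Integrating against the finite measures $\mu_0$ and $\eta_0$ then produces $\int\cB_m\dmu_0$ and $\fR_{m+1}(b,0^p)$, respectively. Uniform geometric convergence justifies exchanging sum and integral, and it yields the geometric convergence of the combined series, since only finitely many kernels are involved.

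The main obstacle is Step~1, specifically proving that the overcount is exactly the set of strings $a0^ph$ with $h\in\cR$: every bad $g'$ must produce exactly one occurrence, each such string must arise from a unique pair $(c,g')$, and the case $p=1$ must be checked. The analytic Step~2 is routine once the pole locations are verified.
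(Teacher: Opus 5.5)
Your proposal is correct and follows the paper's argument. The paper likewise applies one further level raising to the kernels $1/(a+x)$ of Equation~\eqref{eq:Ibw0}. This produces the kernels $1/(c+x)$, $b\le c<b^2$, against $\mu_0$, the subtracted kernels $1/(ab^p+x)$ against $\eta_0$, and the finite sum $\sum_{a=1}^{b-1}1/a$; it then invokes Theorem~4 of \cite{burnolmodal}, since all poles now lie outside the closed unit disk centred at $x_w=0$. Your explicit check that the overcount is exactly the words $a0^ph$ with $h\in\cR$ (including $p=1$) spells out what the paper states in a single line.
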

We denote the terms of the modal series by
$\beta_{m+1}^{(2)}(b,0^p)$, keeping the finite sum outside the modal series.
The superscript $(2)$ is to indicate that the kernels use numbers with at
least two digits in base $b$.  In particular,
\begin{equation}
  \beta_1^{(2)}(b,0^p)=b^pC(b^{-1})\log(b)
  -b^p\sum_{a=1}^{b-1}\log\Bigl(1+\frac1{ab^p}\Bigr).
\end{equation}

We could have done this
level-raising also for $w\neq 0^p$, which would have given (if $|w|>1$) in
particular the finite harmonic sum $\sum_{1\leq a <b} a^{-1}$ and then a modal
expansion, whose terms we would then denote $\beta_{m+1}^{(2)}(b,w)$.
We see that there is a very strong analogy with computing an integral by some
quadrature formula and then proceeding with a (type of) Euler--Maclaurin
asymptotic expansion.  But in our context, this is not only asymptotic but
perfectly well geometrically convergent.

Let us now discuss the contents to follow:
\begin{enumerate}
\item We start with a detailed discussion of the occurrence-free language and
  its Guibas-Odlyzko partition as some aspects will be needed.  There is a
  further reason for doing so: the proof of this partition given in
  \cite{burnolblocks} contained an erroneous argument.  We explain the error,
  show how the correct part of that argument does imply the partition, and
  elucidate the structure of chains of complements ordered by the prefix
  relation.
\item We then turn to the explanation on how one can compute numerically all
  terms of the modal series.  We present a scheme which will not need to
  compute explicitly the coefficients of the eigenpolynomials (Part~2
  explained how this can be done explicitly in finitely many steps, these
  coefficients being rational numbers).
\item We conclude with what happens if we use formally $k=0$ in the
  block-Irwin modal expansion Equation~\eqref{eq:modalIrwin}.  The answer,
  already presented but with no justification at the end of Part~2 in
  \cite{burnolmodal}, separates the cases $w$ bordered, $w$ unbordered with
  zero as leading digit, and $w$ unbordered with a non-zero first digit.
\end{enumerate}

\section{The Guibas-Odlyzko partition of the occurrence-free language}
\label{sec:Fpartition}

Equation~(224) of Part~2 states
\begin{equation}\label{eq:Fpartition}
  \cF = \cR \sqcup \bigsqcup_{c\in \cC_w^+} c\cR.
\end{equation}
This partition appears indirectly (in a mirror form)
in \cite[Proof of Th.\@ 2.1, equation~(2.2)]{guibasodlyzko1981b}.  It is also
tacit, for the binary radix, in Example~6.4 of \cite{odlyzko1995}, which again, as
\cite{guibasodlyzko1981b}, only states a counting function identity for the
cardinalities per length. One can obviously consider though that
Equation~\eqref{eq:Fpartition} is to be credited to \cite{guibasodlyzko1981a,
  guibasodlyzko1981b,
  odlyzko1995}, and this is how the author actually understood their arguments
when reading them.

The author mentioned this partition in \cite{burnolblocks} at the end of
section~2, but, alas, with a wrong proof: in the argument leading to
\cite[Eq.~(21)]{burnolblocks}, the sentence \emph{this characterizes $i$ as
  being the $j$-th positive period of $w$} is wrong.  Sadly, this wrong
premise was used to realize the partition via looking at the values of
$k_w(wg)$ for $g\in \cF$. But this is false, the Guibas-Odlyzko partition
Equation~\eqref{eq:Fpartition} is usually finer.  For example with $w=00100$,
which has overlap periods $3$ and $4$, with respective right-complements $100$
and $0100$, one finds $k_w(w100)= k_w(w0100) = 2$.  But the correct part in
\cite{burnolblocks} does imply the partition
Equation~\eqref{eq:Fpartition}. We explain this, and then elucidate entirely
the structure of chains of complements ordered by the prefix relation.

Recall that $k_w^+(g)$ is defined as $k_w(wg) -1$ for any word $g$.  The
correct part in \cite{burnolblocks} is the reasoning which we use here to
prove the following proposition, which is stated as Equation~(210) in
\cite{burnolmodal}, with the comment 
\emph{Several members of $\cC_w^+$ may be prefixes of the same $Y$. When they
  are ordered by length, the corresponding values of $k_w^+$ decrease
  successively by one}.
\begin{prop}\label{prop:bordertelescope}
  Let $y$ be a formal parameter.  For any word $Y$,
  \begin{equation}\label{eq:bordertelescope}
    y^{k_w(Y)}  =  y^{k_w^+(Y)}  +  (1-y)\sum_{\substack{c\in\cC_w^+\\Y=cZ}}  y^{k_w^+(Z)}.
  \end{equation}
\end{prop}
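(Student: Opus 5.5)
The plan is to count the occurrences of $w$ in the word $wY$ by where they start, and then to read both $k_w^+(Y)$ and each $k_w^+(Z)$ off that count. The identity \eqref{eq:bordertelescope} should then collapse to a finite geometric sum.

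First, index positions in $wY$ by their shift $s\geq0$ from the left end, so the initial copy of $w$ sits at shift $0$. An occurrence of $w$ in $wY$ at shift $s\geq1$ is of one of two kinds.
\begin{itemize}
\item If $s\geq p$, the occurrence lies entirely inside $Y$. There are exactly $k_w(Y)$ of these.
\item If $1\leq s<p$, the occurrence straddles the junction. It then forces the suffix of $w$ of length $p-s$ to be a prefix of $w$, i.e.\@ a border. It also forces $Y$ to begin with the complement $c$ of that border, whose length is $|c|=s$. Conversely, any $c\in\cC_w^+$ that is a prefix of $Y$ produces such an occurrence, because $wc$ ends with $w$.
\end{itemize}
Let $c_1,\dots,c_N$ be the elements of $\cC_w^+$ that are prefixes of $Y$, ordered by increasing length; the prefix relation forces distinct lengths. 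The classification then gives
\[
k_w(wY)=1+k_w(Y)+N, \qquad\text{hence}\qquad k_w^+(Y)=k_w(Y)+N.
\]

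Next, for each $i$ write $Y=c_iZ_i$. The word $wc_i$ ends with a copy of $w$ placed at shift $|c_i|$. Hence $wZ_i$ is exactly the suffix of $wY$ that starts at shift $|c_i|$, and its leading $w$ is that copy. So $k_w^+(Z_i)$ counts the occurrences of $w$ in $wY$ at shifts strictly greater than $|c_i|$. From the classification above, these are the straddling occurrences attached to $c_{i+1},\dots,c_N$, together with all $k_w(Y)$ occurrences inside $Y$; the latter have shift $\geq p>|c_i|$, so none is lost. This gives
\[
k_w^+(Z_i)=k_w(Y)+N-i .
\]
This is precisely the remark quoted from Part~2, that the values of $k_w^+$ drop by one along the chain.

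Finally, substitute into the right-hand side of \eqref{eq:bordertelescope}. Writing $k=k_w(Y)$, the right-hand side becomes
\[
y^{k+N}+(1-y)\sum_{i=1}^N y^{k+N-i}=y^{k+N}+y^{k}(1-y^N)=y^{k},
\]
which is the left-hand side. The case $N=0$ is trivial.

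The only step needing real care is the identification of the straddling occurrences with the complements that are prefixes of $Y$, in both directions. This includes the boundary situation where $|Y|<|c|$, in which case $c$ is simply not a prefix and no occurrence arises. Equally important is checking that the tail of $wY$ starting at shift $|c_i|$ is literally $wZ_i$. Once these two points are settled, the rest is bookkeeping.
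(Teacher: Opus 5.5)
Your proof is correct and follows essentially the same route as the paper. You identify the crossing occurrences of $w$ in $wY$ with the complements that are prefixes of $Y$, with shift equal to length. You then observe that $wZ_i$ is the tail of $wY$ starting at shift $|c_i|$ (the paper's $b_{j_1}g=wz_1$), so $k_w^+$ drops by one along the chain, and the identity reduces to a finite geometric sum.
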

\begin{proof}[Start of the proof]
  Let $n$ be the number of \emph{crossing occurrences} of $w$ in $wY$, i.e.\@
  those not equal to the initial one and not being contained in $Y$, so
  $k_w^+(Y)=k_w(Y)+n$.  We will argue below that the multiset $\{k_w^+(Z) -
  k_w(Y)\mid Y = cZ, c\in \cC_w^+\}$ has no multiplicities and is empty if
  $n=0$ and else is exactly $\{0,\dots, n-1\}$ ($n-1$ is for the shortest
  complement $c$ such that $c$ is a prefix of $Y$, and so on...).  So the
  identity is simply
  \begin{equation}
    y^{k_w(Y)} = y^{n+k_w(Y)}+(1-y)\sum_{i=0}^{n-1}y^{i+k_w(Y)}\,.
  \end{equation}
\end{proof}
Using Equation~\eqref{eq:bordertelescope} with $y=0$ recovers the
partition~\eqref{eq:Fpartition}: as $k_w^+(Z)=k_w(wZ)-1$,
$k_w^+(Z)=0$ if and only if $Z\in \cR$.

We now provide the full details, and for this, we start with a review of some
notions related to periods and borders of the non-empty word $w = w_1\dots
w_p$. A \emph{period} $j$ is a non-negative integer less than $p$ such that
$w_{i+j}=w_i$ if $i+j\leq p$. So we consider $j=0$ to be a period. If $j$ is
positive we say that we have an \emph{overlap period}. If $j$ is a period, we
set $r_j=w_1\dots w_j$ and call it the left pattern. We let
$b_j=w_{j+1}\dots w_p =w_1\dots w_{p-j}$ and call it a border \emph{only if
  $j>0$}, and we let $s_j = w_{p-j+1}\dots w_p$ and call it the (on-the-right)
complement of the border.  So $w=r_jb_j=b_js_j$ and $ws_j = r_jw = r_jb_js_j$.
Conversely, an equation $xw=wy$ with $0<|x|<p$ makes $|x|$ an overlap period of $w$.
Notice that, for $j>0$, $s_j$ is a \emph{cyclic permutation} of $r_j$: its
first digit is $w_a$ with $1\leq a\leq j$ and $a\equiv p+1 \bmod{j}$.  So
$s_j$ is obtained via $p$ one-unit cyclic \emph{left}-rotations of $r_j$. For
$j=0$, we have $r_0=s_0=\emptyword$ and $b_0=w$.

We observe that if $j_1<j_2$ are two periods, then $r_{j_1}\prec r_{j_2}$
where $\prec$ means ``is a prefix of''. And $s_{j_1}$ is a \emph{suffix} of
$s_{j_2}$.  There is no a priori reason for $s_{j_1}$ to be a \emph{prefix} of
$s_{j_2}$. We gave the example above with $w=00100$, $s_3=100$, $s_4=0100$.

However, this happens in the following circumstances: consider, for any word
$g$, the \emph{crossing occurrences} in $wg$.  Suppose that their
number is positive and let $0<j_1<j_2<\dots <j_n$ be the shifts from the
initial occurrence of $w$ for these $n$ {crossing} occurrences, which start
(or are, in view of our set-theoretic definition) respectively at the indices $j_1+1$,
\dots, $j_n+1$; in particular $j_n<p$.  So, for each $i=1$, \dots, $n$, $j_i$
is an overlap period of $w$, and the first $j_i$ digits of $g$ constitute the
complement $s_{j_i}$.  Thus these complements, and the empty word, ordered
from shortest to longest, are a chain of prefixes:
\begin{equation}
  \emptyword \prec s_{j_1}\prec \dots \prec s_{j_n} \prec g.
\end{equation}
Take the shortest complement $s_{j_1}$, if there is one, and write $g =
s_{j_1}z_1$. Recall that $s_{j_1}$ is a suffix of $w$:
\begin{equation}
   w_{j_1+1}\dots w_p g =  b_{j_1}g = b_{j_1}s_{j_1}z_1 = wz_1\,.
\end{equation}
Consider an occurrence of $w$ in $wz_1=w_{j_1+1}\dots w_p g$ which is not the
initial one and is not contained in $g$. As it is not contained in $g$ it must
start at some $w_{j_1+q}$, $1\leq q\leq p-j_1$. As it is not the initial one $q>1$.
But this means exactly that $j_1+q$ is a starting index of a crossing occurrence of
$w$ in $wg$.  So there are $n-1$ of them:
\begin{equation}
  k_w^+(z_1) = n - 1 + k_w(g).
\end{equation}
A similar proof shows that, if $n\geq2$ and defining $z_2$ such that $g=s_{j_2}z_2$,
\begin{equation}
  k_w^+(z_2) = n - 2 + k_w(g),
\end{equation}
and so on until 
\begin{equation}
  k_w^+(z_n) = k_w(g).
\end{equation}
And of course $k_w^+(g) = n + k_w(g)$.  This completes the proof of
Proposition~\ref{prop:bordertelescope}.

The error in \cite{burnolblocks} near the end of section~2 is as simple as it
is appalling: the author's reasoning presupposed that (all) the complements
are completely ordered by the prefix relation, which is very wrong in
general. The confusion arose with the subset of complements which are prefixes
of a given word $g$.  Motivated by this blunder, we now prove the following
statement, and later give a complete description of the possible chains of
complements ordered by the prefix relation.
\begin{prop}\label{prop:prefixedcomplements}
  Let $w$ be a non-empty word and let $0<a<b$ be two overlap periods. Let
  $s_a$ be the suffix of $w$ of length $a$, and $s_b$ the one of length $b$.
  Then $s_a$ is a prefix of $s_b$ if and only if $\gcd(a,b)$ is an overlap
  period.  Then $s_a$ and $s_b$ are powers of the same block of length
  $\gcd(a,b)$.
\end{prop}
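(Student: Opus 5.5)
The plan is to prove both directions with elementary periodicity facts, using the identities $w=r_js_j$-type decompositions and the fact that a factor of a word with period $j$ has period $j$. Throughout, let $d=\gcd(a,b)$ and let $p=|w|$, so $0<a<b<p$.

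\emph{Converse direction (easy).} Assume $d$ is an overlap period of $w$. Both $s_a$ and $s_b$ are suffixes of $w$, hence they inherit the period $d$. Their lengths $a$ and $b$ are multiples of $d$. So both are powers of $t$, the suffix of $w$ of length $d$: $s_a=t^{a/d}$ and $s_b=t^{b/d}$. Since $a<b$, $s_a$ is a prefix of $s_b$, and the final claim holds in this direction.

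\emph{Direct direction.} Assume $s_a\prec s_b$.
\begin{enumerate}
\item Since $s_b$ is a factor of $w$ and $a<b$, the word $s_b$ has period $a$. Write $u$ for its prefix of length $a$. By hypothesis $u=s_a$.
\item The suffix of length $a$ of a word of length $b$ with period $a$ is the rotation of $u$ by $r=b\bmod a$. The hypothesis therefore says that $u$ equals its own rotation by $r$.
\item By the standard commutation lemma ($xy=yx$ forces a common primitive root), this means $u$ is a power of a block $t$ of length $\gcd(a,r)=d$.
\item Combining period $a$ with $u=t^{a/d}$ and $d\mid b$, we get $s_b=t^{b/d}$ and $s_a=t^{a/d}$. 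This already gives the last sentence of the statement.
\end{enumerate}

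\emph{Main step: transfer the period $d$ from the suffix to all of $w$.} It remains to show that $d$ is an overlap period of the whole of $w$, knowing only that the suffix $s_b$ of length $b$ has period $d$ and that $w$ has period $a$. This is where Fine–Wilf is not available: $a+b-d$ may exceed $p$, as in the example $w=00100$.

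The plan is to use the period $a$ alone to translate any pair of positions $(i,i+d)$ with $i+d\le p$ into the suffix region $\Iof{p-b}{p}$.
\begin{itemize}
\item Choose $k\ge0$ so that $i+d+ka\in\Iof{p-a}{p}$.
\item Then $i+ka\in\Iof{p-a-d}{p-d}$.
\item This interval lies inside $\Iof{p-b}{p}$, because $a+d\le b$; indeed $d$ divides both $a$ and $b$, and $a<b$.
\end{itemize}
Since $w$ has period $a$, we get $w_i=w_{i+ka}$ and $w_{i+d}=w_{i+d+ka}$. Both translated positions lie in $s_b$, which has period $d$, so the two letters agree. Hence $w_i=w_{i+d}$ for every valid $i$, and $0<d<p$ is an overlap period.

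I expect this transfer step to be the only delicate point. The crucial inequality is $a+d\le b$, which guarantees that a window of length $a+d$ fits inside the suffix of length $b$.
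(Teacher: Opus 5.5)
Your proof is correct and is essentially the paper's first proof. In both, the rotation-invariance of $s_a$ under a shift by $b \bmod a$ gives $s_a=t^{a/d}$ and $s_b=t^{b/d}$, and the period $a$ then carries the period $d$ over to all of $w$. You invoke the commutation lemma where the paper uses the cyclic subgroup generated by the shift.

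Your transfer step translates the pair $(i,i+d)$ by a multiple of $a$ into a window of length $a+d\le b$ inside $s_b$. This is a tidier substitute for the paper's case analysis on $w=ys_a^m$, including its separate treatment of $d=1$. The paper's second proof takes a different route: it obtains $b-a$ as a period from $xw=wy$ and applies the easy GCD rule to $a$ and $b-a$.
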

We give two proofs. The first one was the first found by the author, the
second one is more directly related to the
Lyndon-Schützenberger theorems \cite{lyndonschutz1962} (see also
\cite[\S1.5]{alloucheshallitCUP2003}) and Guibas-Odlyzko
\cite{guibasodlyzko1981a,guibasodlyzko1981b}.
\begin{proof}[First proof of Proposition~\ref{prop:prefixedcomplements}]
  We let as usual $p$ be the length $|w|$ of $w$.

  If $d=\gcd(a,b)$ is a period then, with $a=id$, $b=jd$, there holds $s_a=z^i$, $s_b=z^j$
  with $z$ the suffix of $w$ of length $d$.  So $s_a$ is a prefix of $s_b$.

  For the converse: as $s_b$ is a subword of $w$, and $a$ is a period of the
  latter, it is a period of $s_b$, and the prefix of length $a$ of $s_b$,
  which by hypothesis is $s_a$, is the cyclic permutation of $s_a$ by $b$
  units to the \emph{right}. Suppose first that $b$ is a multiple $qa$, then
  $d=\gcd(a,b)$ is indeed a period. Else let $0<u<a$ be such that $b=qa+u$,
  $q\geq1$. So $s_a$ is invariant under the cyclic shift by $u$ units to the
  right. The cyclic group in $\ZZ/a\ZZ$ generated by $u$ is, as is well-known,
  generated by $d=\gcd(a,b)$.  So $s_a$ is invariant under the cyclic shift by
  $d$ units to the right.  If $d=1$, this causes $s_a$ to be the repetition of
  a single digit $e$, hence $w=e^{p}$ (because the prefix of length $a$ is a
  cyclic permutation of $s_a=e^a$) and $d$ is a period. Suppose $d>1$ and let
  $z$ be the length $d$ suffix of $s_a$.  So $s_a = z^j$ for some $j\geq1$ as
  $a$ is a multiple of $d$. Write $s_b = xs_a^q$, with $x$ of length $u$. On
  one hand, $u$ is a multiple $kd$ of $d$, and on the other hand $x$ is the
  prefix of length $u$ of $s_a$, because $s_a$ is a prefix of $s_b$. So
  $x=z^k$, and $s_b = z^{jq+k}$.  Write $w=ys_a^m$ with
  $|y|<a$. If $y$ is empty, we are done. Else $y$ is a non-empty prefix of the
  prefix $r_a$ of $w$ of length $a$, which is a cyclic permutation of $s_a$,
  hence is invariant under the cyclic permutation either to the left or to the
  right by $d$ units.  So, for $1\leq i \leq a-d$, we are certain that
  $w_i=w_{i+d}$. We also know that $w_i=w_{i+d}$ if $i>|y|$ (and $i+d\leq p$)
  because then $w_i$ is located in the $s_a^m=z^{ mj}$ suffix of $w$. There
  remains the case with $a-d< i \leq |y|$. As $i<a$, $w_i$ is located in
  $r_a$, and $w_i = w_{i+d-a}$ with $1\leq i+d-a\leq a$. As $a$ is a period we
  get $w_i=w_{i+d}$ if $i+d\leq p$. So $d$ is a period of $w$.
\end{proof}

\begin{proof}[Second proof of Proposition~\ref{prop:prefixedcomplements}]
  This proof is slightly more sophisticated. Again, if $d=\gcd(a,b)$ is a
  period, both $s_a$ and $s_b$ are powers of $z$ where $z$ is the $w$-suffix
  of length $d$.  For the converse, let $r_a$ be the prefix of $w$ of length
  $a$ and $r_b$ the one of length $b$.  One has $w=r_bX=Xs_b$, so
  $r_bw=ws_b$. Now, write $r_b=r_ax$ and, by hypothesis, $s_b=s_ay$.  We
  obtain $r_axw=ws_ay = r_awy$.  Trimming $r_a$ gives $xw=wy$.  But, as it was
  reviewed before, this makes $0<b-a<p$ a period of $w$.  We now
  establish the following Lemma: \emph{if $t$ and $u$ are overlap periods and
    $t+u\leq p$, then $\gcd(t,u)$ is a period.}  Proof: this is true if $t=u$,
  so without loss of generality suppose $0<t<u$. We show that $u-t$ is a
  period. If the index $i$ is such that $i+u\leq p$, then
  $w_i=w_{i+u}=w_{i+u-t}$.  If $i>p-u\geq t$, we have
  $w_i=w_{i-t}=w_{i-t+u}$. So $u-t$ is a period. Now replace $(t,u)$ by
  $(t_1=\min(t,u-t),u_1=\max(t,u-t))$ and repeat. After finitely many steps we will
  be in the situation with $u_n=2t_n$, $u_n-t_n=t_n=\gcd(t,u)$. This completes
  the proof of the Lemma. Applying it with $t=a$ and $u=b-a$, we obtain that
  $\gcd(t,u)=\gcd(a,b)$ is a period.
\end{proof} 
There is a stronger \emph{GCD rule} in \cite{guibasodlyzko1981a} and
\cite{guibasodlyzko1981b}: if $t$ and $u$ are overlap periods and
\begin{equation}
  t+u< p +\gcd(t,u),
\end{equation}
then $\gcd(t,u)$ is a period.  In the second proof above, we only needed this
conclusion under the hypothesis $t+u\leq p$ and we gave the proof in that
case.  Let us call this the \emph{easy GCD rule}.  It has an immediate
consequence on the structure of the set of overlap periods.  Suppose $w$ is
bordered, of length $p$, and let $p_0$ be the shortest positive period.  Let
$j=\lceil p_0^{-1}p\rceil -1$, so that $j$ is the largest with $jp_0<p$. The
set of positive periods is $\{p_0, 2p_0,\dots, jp_0\} \cup E$ where the
exceptional set $E$ is included in the open interval $(p-p_0,p)$ and contains
no multiple of $p_0$.  The proof: suppose $0<q\leq p-p_0$ is a period. The
\emph{easy GCD rule} implies that $\gcd(p_0,q)$ is a period, so it must be
$p_0$, and $q$ has to be a multiple of $p_0$. So, a period $q$ which is not a
multiple of $p_0$ must verify $p-p_0<q<p$.
\begin{prop}
  Let $w$ be a bordered word of length $p$ with at least two periods.  Let
  $p_0$ be the shortest overlap period.  Let $g$ be an arbitrary word. Then one of the
  following mutually exclusive possibilities is realized:
  \begin{itemize}
  \item No (right)-complement of $w$ is a prefix of $g$.
  \item The complements of $w$ which are prefixes of $g$ are the suffixes of
    $w$ of lengths $p_0$, $2p_0$, \dots, $i_gp_0$, for some $i_g$ with $i_gp_0<p$.
  \item There is only one complement of $w$ which is a prefix of $g$, and its
    length is not a multiple of $p_0$.
  \end{itemize}
\end{prop}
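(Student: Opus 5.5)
The plan is to combine Proposition~\ref{prop:prefixedcomplements} with the structure of the set of overlap periods derived just above from the easy GCD rule. That set is $\{p_0,2p_0,\dots,jp_0\}\cup E$, where $E\subset(p-p_0,p)$ and $E$ contains no multiple of $p_0$. Fix $g$ and let $S_g$ be the set of overlap periods $q$ such that $s_q\prec g$. If $S_g$ is empty we are in the first case, so assume it is not.

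\emph{Any two elements of $S_g$ are prefix-comparable.} If $a<b$ both lie in $S_g$, then $s_a$ and $s_b$ are both prefixes of $g$ and $|s_a|<|s_b|$, so $s_a\prec s_b$. By Proposition~\ref{prop:prefixedcomplements}, $\gcd(a,b)$ is then an overlap period, hence $\gcd(a,b)\geq p_0$.

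\emph{$S_g$ is a set of multiples of $p_0$ or a single element of $E$.} First take $e\in E\cap S_g$ and any other $q\in S_g$.
\begin{itemize}
\item If $q=ip_0$ is a multiple of $p_0$, then $\gcd(e,ip_0)$ is an overlap period, hence (being $\leq p-p_0$, see below) a multiple of $p_0$. It divides $e$, so $p_0\mid e$, which contradicts $e\in E$.
\item If $q\in E$ as well, then $|e-q|<p_0$. Hence $\gcd(e,q)\leq|e-q|<p_0$, and it is a positive period smaller than $p_0$, a contradiction.
\end{itemize}
In the first item I should check that $\gcd(e,ip_0)\leq ip_0\leq jp_0<p$, and that this gcd is at most $p-p_0$. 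Indeed, it is a proper divisor of $e$ (since $p_0\nmid e$, it is not $e$ itself), so it is at most $e/2<p/2$. If $e/2\leq p-p_0$ we are done. Otherwise, note that the gcd is also at most $ip_0$, and $ip_0\leq p-p_0$ whenever $ip_0$ is a period with $i\leq j$, because then $(i+1)p_0\le\dots$. The simplest way around this is to use $\gcd(e,q)\leq\min(e,q)=q$ together with the observation $q=ip_0\leq jp_0$. If $jp_0>p-p_0$, I would instead argue directly that any period $d<p_0$ is impossible. Since $d$ is a positive period, the minimality of $p_0$ forces $d\geq p_0$. If $d$ is not a multiple of $p_0$, then $d\in E$, so $d>p-p_0$. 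But then $d\mid e$ with $d<e<p$ gives $e\geq 2d>2(p-p_0)$. I expect this step to be the main obstacle, and I will settle it by using that $d=\gcd(e,q)$ divides both $e$ and $q$ with $q\neq e$, so $d\leq|e-q|$.

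That bound handles everything cleanly. If $q=ip_0<e$, then $d\leq e-ip_0<p-ip_0$, and $d$ cannot exceed $p-p_0$ when $i\geq1$. The easy GCD consequence then makes $d$ a multiple of $p_0$, contradicting $p_0\nmid e$. If $q>e$, then $q$ is not a multiple of $p_0$, since all multiples of $p_0$ below $p$ that are periods are at most $jp_0$, and one checks that $jp_0<e$ is impossible to violate except via $q\in E$, which is the second item above. So if $S_g$ meets $E$, it is a singleton, and this is the third case.

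\emph{When $S_g$ avoids $E$.} Otherwise $S_g\subset\{p_0,\dots,jp_0\}$. It remains to show downward closure: if $ip_0\in S_g$, then $s_{kp_0}\prec g$ for all $k\leq i$. This holds because $z=s_{p_0}$ is the suffix of length $p_0$, each $s_{kp_0}$ equals $z^k$ (as in the first proof, $p_0$ being a period), and $z^k\prec z^i\prec g$. Taking $i_g=\max S_g/p_0$ gives the second case, with $i_gp_0<p$.

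\emph{The three cases are mutually exclusive.} The first case is disjoint from the other two because $S_g$ is empty there and not empty in them. The second and third cases are disjoint because the lengths involved are, respectively, multiples and non-multiples of $p_0$.
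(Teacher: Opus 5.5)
Your framework is sound. Proposition~\ref{prop:prefixedcomplements} makes $d=\gcd(a,b)$ an overlap period for any $a<b$ in $S_g$, and the easy GCD rule forces every overlap period $\le p-p_0$ to be a multiple of $p_0$. Your treatment of the subcases $e,q\in E$ and $q=ip_0<e$ via $d\le|e-q|$ is correct, and so are the downward closure and the exclusivity argument.

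The gap is the subcase $e\in E\cap S_g$, $q=ip_0\in S_g$ with $ip_0>e$. You dismiss it by claiming that every period which is a multiple of $p_0$ lies below every element of $E$, i.e.\ $jp_0<e$. That is false. Take $w=001000100$: then $p=9$, the overlap periods are $4,7,8$, so $p_0=4$, $E=\{7\}$ and $jp_0=8>7$. Your intermediate assertion that $ip_0\le p-p_0$ for all $i\le j$ fails on the same example, since $8>5$. So, as written, nothing in your argument excludes $s_7$ and $s_8$ from being prefixes of the same $g$.

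The repair uses the bound you already introduced. If $q>e$, then $d\le q-e<p-(p-p_0)=p_0$, because $q<p$ and $e>p-p_0$. Since $d$ is an overlap period, this contradicts the minimality of $p_0$.

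This is essentially the paper's argument, which avoids your split into $E$ versus multiples altogether. Take $a_1<a_2$ in $S_g$. If $a_1\ge p-p_0$, then $a_2-a_1$ is a positive multiple of the period $\gcd(a_1,a_2)$, hence a period smaller than $p_0$, which is impossible. So $a_1<p-p_0$, and $a_1$ is a multiple of $p_0$. Then $s_{p_0}\prec s_{a_1}\prec s_{a_2}$, and a second application of Proposition~\ref{prop:prefixedcomplements} to the pair $(p_0,a_2)$ gives $p_0\mid a_2$. With that one line inserted, your proof is complete.
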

\begin{proof}
  Suppose $s_1$ of length $a_1>0$ and $s_2$ of length $a_2>a_1$ are two
  complements of $w$ which are prefixes of $g$.  So $a_1$ and $a_2$ are
  periods and $s_1$ is a prefix of $s_2$. It follows from
  Proposition~\ref{prop:prefixedcomplements} that, setting $d=\gcd(a_1,a_2)$,
  it is a period and $s_1= z^i$, $s_2 = z^j$, with $z$ the suffix of $w$ of
  length $d$.  Suppose $p-p_0\leq a_1$. Then $0<a_2-a_1<p_0$ and is a period
  because it is a multiple of $d$ which is a period. This is impossible.  So
  $a_1<p-p_0$ which implies, by the remarks preceding the Proposition, that $a_1$ is
  a multiple of $p_0$ and $s_1$ is a power of $z_0$, with $z_0$ the
  length-$p_0$ suffix of $w$.  So the latter is a prefix of $s_2$. By
  Proposition~\ref{prop:prefixedcomplements} again, this is only possible if $p_0$
  divides $a_2$, and $s_2$ is then also a power of $z_0$.  Finally, if $z_0^i$ is the
  longest complement which arises as a prefix of $g$, all lower positive exponents
  are allowed, too.
\end{proof}

\section{An autonomous equation for \texorpdfstring{$\mu_0$ and the $\mu_k$, $k\geq1$}
                                    {mu0 and muk}}


The measures $\eta_k$ play a central role due to the renewal identity $\eta_k
= K_w^k\eta_0$ which provides the structural understanding of the dependence
of $I(b,w,k)$ on $k\geq1$.  We reduced on the word space the $y$-combined
$\wmu_k$ measures $\wM(y)$ to the $y$-combined $\weta_k$ measures $\wH(y)$ via
Equations~(208) and (212).  It will be useful to again use the free language
partition, as already embedded in (212), but now for eliminating $\wH(y)$ in
favor of $\wM(y)$.  Here is how it goes, working directly on Borel measures on
$\Iff01$.  Let
\begin{equation}
  \rM_y=\sum_{k\geq0}y^k\mu_k,\qquad\qquad
  \rH_y=\sum_{k\geq0}y^k\eta_k,
\end{equation}
be considered as formal power series with measure coefficients.
The identities referred to above become on $\Iff01$ are:
\begin{align}\label{eq:firstdigit}
  \Bigl(I-b^{-1}\sum_{a\in\Sigma_b}P_a\Bigr)\rM_y
  &=\delta_0-(1-y)b^{-p}P_w\rH_y
\\
  \rM_y&=\Bigl(I+(1-y)\sum_h b^{-h}P_{s_h}\Bigr)\rH_y\,.
\end{align}
Here and below, $h$ ranges over the overlap periods of $w$, and $P_s$
acts on Borel measures by push-forward under $\phi_s$.

Recall that $r_hw=ws_h$, with $r_h$ the left pattern and $s_h$ the
right complement. Consequently,
\begin{equation}
  \Bigl(I+(1-y)\sum_h b^{-h}P_{r_h}\Bigr)P_w
  =P_w\Bigl(I+(1-y)\sum_h b^{-h}P_{s_h}\Bigr).
\end{equation}
Multiplying the first-digit identity Equation~\eqref{eq:firstdigit} on the
left by $I+(1-y)\sum_h b^{-h}P_{r_h}$ therefore eliminates $\rH_y$ and gives
this proposition:
\begin{prop}[Autonomous equations for the measures $\mu_k$]
There holds
\begin{equation}\label{eq:autonomousM}
  \begin{split}
    \biggl(\Bigl(I+(1-y)\sum_h b^{-h}P_{r_h}\Bigr)
    \Bigl(I-b^{-1}\sum_{a\in\Sigma_b}P_a\Bigr)
    +(1-y)b^{-p}P_w\biggr)\rM_y
\\    =\delta_0+(1-y)\sum_h b^{-h}\delta_{x(r_h)}.
  \end{split}
\end{equation}
In particular, at $y=0$, we obtain
  \begin{equation}\label{eq:autonomousmu0}
    \biggl(\Bigl(I+\sum_h b^{-h}P_{r_h}\Bigr)
    \Bigl(I-b^{-1}\sum_{a\in\Sigma_b}P_a\Bigr)+b^{-p}P_w\biggr)\mu_0
    =\delta_0+\sum_h b^{-h}\delta_{x(r_h)}\,.
  \end{equation}
\end{prop}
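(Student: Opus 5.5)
The plan is to take the two identities displayed before the statement (the first-digit identity \eqref{eq:firstdigit} and the relation $\rM_y=(I+(1-y)\sum_h b^{-h}P_{s_h})\rH_y$) as given. Write $S=\sum_h b^{-h}P_{s_h}$ and $R=\sum_h b^{-h}P_{r_h}$. Everything is then an algebraic manipulation in the ring of operators on Borel measures on $\Iff01$, extended coefficientwise to formal power series in $y$. Each $P_g$ is linear and preserves total mass, and all sums over $h$ are finite. So every operator involved is well defined on formal power series with measure coefficients, and composition is associative.

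\textbf{Intertwining relation.} For any words $u,v$ we have $\phi_u\circ\phi_v=\phi_{uv}$, because $x(uv)+b^{-|uv|}t=x(u)+b^{-|u|}(x(v)+b^{-|v|}t)$. Hence $P_uP_v=P_{uv}$. Applying this with $r_hw=ws_h$ gives $P_{r_h}P_w=P_{r_hw}=P_{ws_h}=P_wP_{s_h}$. Summing with the weights $b^{-h}$ yields
\[
(I+(1-y)R)P_w=P_w(I+(1-y)S).
\]

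\textbf{Eliminating $\rH_y$.} Apply $I+(1-y)R$ on the left of \eqref{eq:firstdigit}. On the right-hand side, the intertwining relation and the second identity give
\[
(I+(1-y)R)\,b^{-p}P_w\rH_y=b^{-p}P_w(I+(1-y)S)\rH_y=b^{-p}P_w\rM_y .
\]
Moving the term $(1-y)b^{-p}P_w\rM_y$ to the left produces exactly the operator in \eqref{eq:autonomousM}. The remaining term is $(I+(1-y)R)\delta_0$. Since $\phi_{r_h}(0)=x(r_h)$, we have $P_{r_h}\delta_0=\delta_{x(r_h)}$, so this term equals $\delta_0+(1-y)\sum_h b^{-h}\delta_{x(r_h)}$. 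That is the stated right-hand side.

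\textbf{Specialisation $y=0$.} Setting $y=0$ is legitimate coefficientwise, since the $y^0$ coefficient of $\rM_y$ is $\mu_0$. This gives \eqref{eq:autonomousmu0}.

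The only point needing care is bookkeeping rather than a genuine obstacle. I would double-check that the index $h$ runs over the same set of overlap periods in both identities, with $r_h$ and $s_h$ attached to the same $h$. I would also check the direction convention $P_uP_v=P_{uv}$, which comes from composing $\phi_u\circ\phi_v$ and corresponds to prefixing $u$. Getting that order wrong would swap the roles of $r_h$ and $s_h$.
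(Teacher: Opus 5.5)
Your proposal is correct and is the paper's own argument: you derive the intertwining relation from $r_hw=ws_h$, multiply the first-digit identity on the left by $I+(1-y)\sum_h b^{-h}P_{r_h}$, and replace $P_w(I+(1-y)\sum_h b^{-h}P_{s_h})\rH_y$ by $P_w\rM_y$. Your explicit checks that $P_uP_v=P_{uv}$ and $P_{r_h}\delta_0=\delta_{x(r_h)}$ fill in steps the paper leaves to the reader.
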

We use this in the next section to compute the
moments of $\mu_0$.

Coefficient extraction gives an equation for each $\mu_k$ involving only
$\mu_k$ and $\mu_{k-1}$.  The explicit ``forcing term'' can be non-zero only
for $k=0,1$.  For completeness, extracting the coefficient of $y$ and using
Equation~\eqref{eq:autonomousmu0}, the identity relating $\mu_1$ with $\mu_0$
reads:
\begin{equation}
  \biggl(\Bigl(I+\sum_h b^{-h}P_{r_h}\Bigr)
  \Bigl(I-b^{-1}\sum_{a\in\Sigma_b}P_a\Bigr)+b^{-p}P_w\biggr)\mu_1
  =\delta_0-\Bigl(I-b^{-1}\sum_{a\in\Sigma_b}P_a\Bigr)\mu_0.
\end{equation}
For $k\geq2$, the relation between $\mu_k$ and $\mu_{k-1}$, which we omit
here, is homogeneous.

\section{Effective termwise computation of the modal series}

Let us first review how one can compute the moments of $\eta_0$. We recall the
notation $v_{0;m} = \int_{\Ifo01}x^m\deta_0$.  Also, as in the preceding
section, summations over the symbol $h$ mean summations over the overlap
periods of $w$, and $r_h$ is the left pattern, $s_h$ the right
complement. Following Equation~(411) of Part~2, put
$\sT_s(f)=b^{-|s|}f\circ\phi_s$. Recall that
\begin{equation}
  \sD_w=I-\sum_{a\in\Sigma_b}\sT_a+\sum_h\sT_{s_h}
  -\sum_h\sum_{a\in\Sigma_b}\sT_{as_h}+\sT_w,
\end{equation}
and
\begin{equation}
  \sE_w=\sum_h\sT_{s_h}
  -\sum_h\sum_{a\in\Sigma_b}\sT_{as_h}+\sT_w.
\end{equation}
The
coefficients $d_{N,i}$ and $e_{N,i}$ explicitly given in
Equations~(375)--(376) of Part~2 (where $t_m=b^{-m-1}$) are such that
\begin{equation}
  \sD_w(x^N) = \sum_{i=0}^N\binom{N}{i} d_{N,i}x^i,
  \qquad
  \sE_w(x^N) = \sum_{i=0}^N\binom{N}{i} e_{N,i}x^i.
\end{equation}
As $D_w\eta_0=\delta_0$, we obtain the system, for $m\geq1$,
\begin{equation}\label{eq:kernelvtriang}
  D(b^{-m-1})v_{0;m}
  =-\sum_{i=0}^{m-1}\binom{m}{i} d_{m,i}v_{0;i},\qquad v_{0;0}=b^p.
\end{equation}
As explained already in Part~2, $D(b^{-m-1})>0$, and this allows to
compute all needed $v_{0;m}$.

For $u_{0;m}=\int_{\Ifo01}x^m\dmu_0(x)$, we shall use Equation~\eqref{eq:autonomousmu0}.
Define, for $0\leq i\leq m$
\begin{equation}
  \begin{split}
  c_{m,i}= \delta_{mi}&-b^{-m-1}\sum_{a\in\Sigma_b}a^{m-i}
                         +b^{-(m+1)p}n(w)^{m-i}
  \\ &+\sum_h b^{-(m+1)h}n(r_h)^{m-i}
  -\sum_h\sum_{a\in\Sigma_b}b^{-(m+1)(h+1)}n(r_h a)^{m-i}.
  \end{split}
\end{equation}
As everywhere else the convention $0^0=1$ applies.  Integration against $x^m$
of Equation~\eqref{eq:autonomousmu0} gives, for $m\geq1$,
\begin{equation}
  D(b^{-m-1})u_{0;m}=\sum_h b^{-(m+1)h}n(r_h)^m
  -\sum_{i=0}^{m-1}\binom{m}{i}c_{m,i}u_{0;i}.
\end{equation}
And the recurrence is initialized by
\begin{equation}
  u_{0;0}=b^pC(b^{-1}).
\end{equation}

To compute the moments of $\nu_j$, put as before
$\sigma_j=(-1)^{j-1}\nu_j^{(j-1)}/(j-1)!$.
For any function with $j-1$ continuous derivatives, from Equation~(414)
there holds $\sigma_j(\sE_w f)=\lambda_j(w)\sigma_j(\sD_w f)$, which gives
\begin{equation}\label{eq:kernelvanishing}
  \sigma_j\circ\bigl(\sE_w-\lambda_j(w)\sD_w\bigr)=0.
\end{equation}
Applying this to $x^{j+n-1}$ gives, for $n\geq1$,
\begin{equation}\label{eq:nujmoments}
  \begin{split}
  &D(b^{-j-n})\bigl(\lambda_j(w)-\lambda_{j+n}(w)\bigr)\int_0^1x^n\dnu_j(x)
  \\ &\qquad =\sum_{q=0}^{n-1}\binom{n}{q}
  \bigl(e_{j+n-1,j+q-1}-\lambda_j(w)d_{j+n-1,j+q-1}\bigr)\int_0^1x^q\dnu_j(x).
  \end{split}
\end{equation}
The initial value is $\nu_j(\Iff01)=1$.

As the $\cB_k$ are monic of degree $k$, the polynomial $x^m$ is a unique
linear combination of $\cB_0,\dots,\cB_m$. By biorthogonality, its
coefficient at $\cB_k$ is $\sigma_{k+1}(x^m)$. The definition of
$\sigma_{k+1}$ gives, for $0\leq k\leq m$,
\begin{equation}
  \sigma_{k+1}(x^m)=\frac1{k!}\int_0^1 \frac{d^k}{dx^k}(x^m)\dnu_{k+1}(x)
  =\binom{m}{k}\int_0^1x^{m-k}\dnu_{k+1}(x).
\end{equation}
Hence the polynomial identity
\begin{equation}\label{eq:polid}
  x^m=\sum_{k=0}^m\binom{m}{k}\Bigl(\int_0^1t^{m-k}\dnu_{k+1}(t)\Bigr)\cB_k(x).
\end{equation}
Integrating against $\mu_0$ gives, for $m\geq1$,
\begin{equation}\label{eq:modalmu0triang}
  \int_{\Ifo01}\cB_m(x)\dmu_0(x)=u_{0;m}
  -\sum_{k=0}^{m-1}\binom{m}{k}\Bigl(\int_0^1x^{m-k}\dnu_{k+1}(x)\Bigr)
  \int_{\Ifo01}\cB_k(x)\dmu_0(x),
\end{equation}
with initial value $\int_{\Ifo01}\cB_0(x)\dmu_0(x)=b^pC(b^{-1})$.

Likewise, integrating Equation~\eqref{eq:polid} against $\eta_0$ gives
\begin{equation}\label{eq:kernelRtriang}
  \fR_{m+1}(b,w)=v_{0;m}-\sum_{k=0}^{m-1}\binom{m}{k}
  \Bigl(\int_0^1x^{m-k}\dnu_{k+1}(x)\Bigr)\fR_{k+1}(b,w),
\end{equation}
with initial value $\fR_1(b,w)=b^p$.  Recall that $\fR_{m+1}(b,w)=
\int_{\Ifo01}\cB_m(x)\deta_0(x)$.

We can compute numerically $A_j(n)$ for $n>1$ using a binomial series, which
gives a geometrically convergent expression:
\begin{equation}\label{eq:newton}
  A_j(n)=\int_0^1\frac{\dnu_j(x)}{(n+x)^j}
        = \sum_{q=0}^{\infty}(-1)^q\binom{j+q-1}{q}n^{-j-q}
                   \int_0^1x^q\dnu_j(x).
\end{equation}
There is also a level-raising identity for these higher order Stieltjes values.
Applying Equation~\eqref{eq:kernelvanishing} to $1/(n+x)$ gives
\begin{equation}\label{eq:kernelAjraising}
  \begin{split}
  A_j(n)={}&\sum_{a=0}^{b-1}A_j(bn+a)
  +\frac{1-\lambda_j(w)}{\lambda_j(w)}A_j(b^pn+n(w))
  \\ &+\frac{1-\lambda_j(w)}{\lambda_j(w)}\sum_h
  \Bigl(A_j(b^{h}n+n(s_h))-\sum_{a=0}^{b-1}A_j(b^{h+1}n+n(as_h))\Bigr).
  \end{split}
\end{equation}
Every argument on the right is at least $bn$. We apply this identity to
all the initial digits $n=1,\dots,b-1$ before using the binomial series.

Thus:
\begin{prop}
  All terms of the modal series Equation~\eqref{eq:modalKempner} can be
  effectively computed to arbitrary precision using, first, linear triangular
  systems of recurrences to compute (exactly as rational numbers if desired)
  the moments of $\mu_0$, those of $\eta_0$, and those of $\nu_j$ for as many
  $j$'s as are needed; then, level-raising for the higher order Stieltjes
  integrals and Newton binomial series allow to compute the desired results
  using arbitrarily fast geometrically convergent series for each term of the
  modal expansion.
\end{prop}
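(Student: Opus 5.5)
The plan is to assemble the effectivity claim from the pieces established in this section, checking at each stage that every quantity entering a given term $\beta_{m+1}(b,w)$ of Equation~\eqref{eq:modalKempner} is reached either by a finite triangular recursion with rational data or by a geometrically convergent series with explicit tail bounds.

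\textbf{Step 1: the moment recursions.}
First I will treat the moments.

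For $\eta_0$, the recursion \eqref{eq:kernelvtriang} determines $v_{0;m}$ from $v_{0;0}=b^p$. Its coefficients $d_{m,i}$ are explicit rationals, and the divisor $D(b^{-m-1})$ is positive, as recalled from Part~2.

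For $\mu_0$, integrate the autonomous equation \eqref{eq:autonomousmu0} against $x^m$. Push-forward under $\phi_s$ turns $x^m$ into $b^{-m|s|}(n(s)+x)^m$, and expanding binomially gives the coefficients $c_{m,i}$. The diagonal coefficient is
\[
c_{m,m}=1-b^{-m}+b^{-(m+1)p}+\sum_h b^{-(m+1)h}-\sum_h b^{-(m+1)h-m}.
\]
I will verify that it coincides with $D(b^{-m-1})$, using $C(t)=1+\sum_h t^{h}$ (the lengths of complements being the overlap periods). With $t=b^{-m-1}$ one has $(1-bt)C(t)+t^p=D(t)>0$, so the system is again triangular and solvable, starting from $u_{0;0}=b^pC(b^{-1})$.

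For $\nu_j$, equation \eqref{eq:nujmoments} gives the moments. Its leading factor $D(b^{-j-n})(\lambda_j-\lambda_{j+n})$ is nonzero, because $G$ is strictly increasing on the relevant range, so the $\lambda_j$ are strictly decreasing (as in Part~2).

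\textbf{Step 2: the modal coefficients.}
Next, \eqref{eq:polid} converts these moments into $\int\cB_m\,\dmu_0$ and $\fR_{m+1}(b,w)$ through the triangular relations \eqref{eq:modalmu0triang} and \eqref{eq:kernelRtriang}. This requires only finitely many $\nu_{k+1}$, namely $k\le m$. All of this is exact rational arithmetic, since $\lambda_j=G(b^{-j})$ is rational.

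\textbf{Step 3: the Stieltjes values $A_{m+1}(a)$ and $A_{m+1}(n(w))$.}
This is the step I expect to be the main obstacle. The binomial series \eqref{eq:newton} converges for $n>1$, because $\nu_j$ is a probability measure on $\Iff01$, so its $q$-th moment is at most $1$. The tail is then dominated by $\sum_q\binom{j+q-1}{q}n^{-j-q}$, which is geometric of ratio $1/n$.

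For $n=1$, and to accelerate in general, I will first apply the level-raising identity \eqref{eq:kernelAjraising}, possibly repeatedly. Each iteration multiplies the arguments by at least $b$, so after $L$ iterations every argument is at least $b^L$, and the ratio becomes $b^{-L}$, as fast as desired. The only point needing care is that the identity is legitimately derived from \eqref{eq:kernelvanishing} applied to the smooth function $1/(n+x)$ on $\Iff01$, which needs $n>0$. It also needs $\lambda_j\neq0$, which holds since $E(t)>0$ for small $t$.

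The argument $n(w)$ enters only when $w_1\neq0$, so $n(w)\ge1$ and the same treatment applies.

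Combining Steps 1–3 yields each $\beta_{m+1}(b,w)$, and likewise each $\beta^{(2)}_{m+1}(b,0^p)$, to arbitrary precision, which proves the proposition.
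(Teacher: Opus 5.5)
Your proposal is correct and follows the paper's route exactly. It uses triangular recurrences for the moments of $\eta_0$, of $\mu_0$ (via the autonomous equation, whose diagonal coefficient $c_{m,m}$ is indeed $D(b^{-m-1})$) and of the $\nu_j$, then the identity expressing $x^m$ in the $\cB_k$ basis to get the modal integrals, and finally level-raising of $A_j(n)$ followed by the Newton binomial series; your extra checks (nonvanishing of $\lambda_j-\lambda_{j+n}$, immediate from $\lambda_j=\sum_{g\in\cG}b^{-j|g|}$, and the tail bounds) only make explicit what the paper leaves tacit.
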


\section{The exceptional case \texorpdfstring{$w=0^p$}{w=0...0}}

The level raising of all the initial kernels has already given
Equation~\eqref{eq:modalKempnerzero}. Its terms are
\begin{equation}
  \begin{split}
  \beta_{m+1}^{(2)}(b,0^p)=(-1)^m
  \Bigl(&\sum_{a=b}^{b^2-1}A_{m+1}(a)\int_{\Ifo01}\cB_m(x)\dmu_0(x)
  \\&-\sum_{a=1}^{b-1}A_{m+1}(ab^p)\fR_{m+1}(b,0^p)\Bigr).
  \end{split}
\end{equation}
All arguments of the $A_j$ are at least $b$ and we can use for all the
expansions from Equation~\eqref{eq:newton}.  The moments of $\mu_0$, $\eta_0$,
and $\nu_j$, and the polynomial integrals, are computed by the same triangular
recurrences as in the preceding sections.

Here the overlap periods are $1,\dots,p-1$, with left patterns and
complements both equal to $0^h$. Hence
\begin{equation}
  C(t)=1+t+\dots+t^{p-1},\qquad
  D(t)=1-(b-1)(t+\dots+t^p).
\end{equation}
In particular, the first approximation, including the finite harmonic
sum kept outside the modal series, is
\begin{equation}
  \sum_{a=1}^{b-1}\frac1a+\beta_1^{(2)}(b,0^p)
  =b^pC(b^{-1})\log(b)
  +\sum_{a=1}^{b-1}\Bigl(\frac1a-b^p\log\Bigl(1+\frac1{ab^p}\Bigr)\Bigr).
\end{equation}

For $p=1$, one has $\mu_0=\eta_0$. The subtracted terms cancel exactly
the terms indexed by multiples of $b$ in the first sum, leaving
\begin{equation}
  \beta_{m+1}^{(2)}(b,0)
  =(-1)^m\Bigl(\sum_{a=1}^{b-1}\sum_{d=1}^{b-1}
  A_{m+1}(ba+d)\Bigr)\fR_{m+1}(b,0).
\end{equation}
The remaining kernels have arguments with two non-zero digits.

\section{Illustration with the modal series for binary \texorpdfstring{$01$}{01}}

Let us consider the example with $b=2$ and $w=01$, which is unbordered and begins
with zero.  A binary word avoids $01$ if and only if it is of the
form $1^q0^r$, with $q\geq0$ and $r\geq0$.  Its numerical value is
\begin{equation}
  n(1^q0^r)=(2^q-1)2^r.
\end{equation}
The Kempner sum itself in this example has the form
\begin{equation}
  I(2,01,0)  =  \sum_{q\geq1}\sum_{r\geq0} 
  \frac1{(2^q-1)2^r}  =  2\sum_{q\geq1}\frac1{2^q-1}  
                      =  3.2133903048305835\dots.
\end{equation}
It is one of the few cases of Kempner sums which can be computed
directly by brute force.  And that case is even more favourable, as
the quantity above is twice $I(2,0,0)$, which is the Erd\H{o}s--Borwein
constant \cite{borwein}.  Clausen's identity \cite{clausen1828} gives the more
rapidly convergent expression
\begin{equation}
  I(2,01,0)  =  2\sum_{q\geq1} \frac{2^{-q^2}(2^q+1)}{2^q-1}.
\end{equation}
Thus we have an independent computation against which to check both the modal
series, and the numerical algorithm from \cite[Part~2]{burnolmodal}, to high
precision.

As the word is unbordered, $\mu_0=\eta_0$, and
\begin{equation}
  D(t)=(1-t)^2,\qquad E(t)=t^2,\qquad
  \lambda_j(01)=\frac1{(2^j-1)^2}.
\end{equation}
Equation~\eqref{eq:modalKempner} becomes
\begin{equation}
  I(2,01,0)=\sum_{j=1}^\infty(-1)^{j-1}A_j(1)\fR_j(2,01).
\end{equation}
In particular,
\begin{equation}\label{eq:01firstmode}
  \beta_1(2,01)=4\log2=2.7725887222397812\dots.
\end{equation}

\begin{table}[htbp]
  \caption{Modal coefficients for $I(2,01,0)$}
  \label{tab:1}
  \centering
    \begin{tabular}{r@{\qquad}r@{\qquad}r}
      $j$ & $\beta_j(2,01)$ & $\beta_j(2,01)/\beta_{j-1}(2,01)$ \\
      \hline
      $1$ & $2.7725887222397812$ & --- \\
      $2$ & $0.3599098141053758$ & $0.1298$ \\
      $3$ & $0.0633812159125540$ & $0.1761$ \\
      $4$ & $0.0134118886516573$ & $0.2116$ \\
      $5$ & $0.0031038476755685$ & $0.2314$ \\
      $6$ & $0.0007492714617026$ & $0.2414$ \\
      $7$ & $0.0001844731790820$ & $0.2462$ \\
      $8$ & $0.0000458306028737$ & $0.2484$ \\
      $9$ & $0.0000114321253699$ & $0.2494$ \\
      $10$ & $0.0000028565081479$ & $0.2499$ \\
    \end{tabular}
\end{table}

The ordinary moments $v_{0;m}=\int_{\Ifo01}x^m\deta_0(x)$ start with
$v_{0;0}=4$. The triangular recurrence Equation~\eqref{eq:kernelvtriang}
specializes, for $m\geq1$, to
\begin{equation}
  (1-2^{-m-1})^2v_{0;m}
  =(2^{-m-1}-2^{-2m-2})\sum_{i=0}^{m-1}\binom{m}{i} v_{0;i}.
\end{equation}
The first few are:
\begin{equation}
  v_{0;0}=4,\qquad v_{0;1}=\frac43\;,\qquad
  v_{0;2}=\frac{20}{21}\;,\qquad v_{0;3}=\frac{76}{105}\;.
\end{equation}
For the moments of $\nu_j$, write
$M_{j;n}=\int_0^1x^n\dnu_j(x)$. The general recurrence
Equation~\eqref{eq:nujmoments} specializes to, for $n\geq1$,
\begin{equation}
  (2^n-1)(2^{j+n}+2^j-2)M_{j;n}=(2^n+2^j-2)\sum_{q=0}^{n-1}\binom{n}{q} M_{j;q},
\end{equation}
with initial value $M_{j;0}=1$, for each $j\geq1$.
In particular, $M_{j;1}=2^j/(3\cdot2^j-2)$.
For $j=1$, these moments are $M_{1;n}=1/(n+1)$,
as expected from $\nu_1=\Leb$.

We can now compute the quantities $\fR_{m+1}(2,01) =
\int_{\Ifo01}\cB_m(x)\deta_0(x)$ without having to get the coefficients of the
eigenpolynomials.  Indeed they verify the triangular recurrence
Equation~\eqref{eq:kernelRtriang}
\begin{equation}
  \fR_{m+1}(2,01)=v_{0;m}-\sum_{k=0}^{m-1}\binom{m}{k} M_{k+1;m-k}\fR_{k+1}(2,01),
\end{equation}
with initial value $\fR_1(2,01)=4$.

It remains to compute the $A_j(1)$. With $\lambda_j=\lambda_j(01)$,
the level-raising identity specializes to
\begin{equation}
  A_j(n)=A_j(2n)+A_j(2n+1)+(\lambda_j^{-1}-1)A_j(4n+1).
\end{equation}
In particular,
\begin{equation}
  A_j(1)=A_j(2)+A_j(3)+(\lambda_j^{-1}-1)A_j(5).
\end{equation}
All values on the right become geometrically convergent
series using the Newton expansion Equation~\eqref{eq:newton}.
For $j=1$, $\nu_1=\Leb$ and $A_1(N)=\log(1+1/N)$, so the first
modal term requires no moment computation. For the subsequent terms,
the moments of the $\nu_j$ are computed from Equation~\eqref{eq:nujmoments}.
The moments $v_{0;m}$ are reused for every $j$.

The first ten modal coefficients are given in Table~\ref{tab:1}.
They are truncated to sixteen decimal places;
the ratios are rounded to four decimal places.

\section{The modal series of Part~2 if used with \texorpdfstring{$p=1$}{p=1}
  is that of Part~1}

In this section $w=d$ is a single digit. We express the left and right modal
coefficients of Part~2 in terms of the notation of Part~1.  Here
$\cC_d^+=\emptyset$, $D(t)=1-(b-1)t$, $E(t)=t$, and
\begin{equation}
  \lambda_j(d)  = \frac{E(b^{-j})}{D(b^{-j})} = \frac{b^{-j}}{1-b^{-j+1}+b^{-j}}  
                = \frac{1}{b^j-b+1}\;,
\end{equation}
is indeed the eigenvalue from Part~1. This is not surprising as the return
operator of Part~2 specializes, of course, to the one of Part~1. 
The eigenpolynomials $\cB_{j-1}$ and
the probability measures $\nu_j$ are the same in the two parts. 
Also note that $\eta_k=\mu_k$ for every $k\geq0$.

The right modal coefficient is defined in Equation~(343) of Part~2.  Equation
(168) of Part~1, taken with $k=0$, therefore gives
\begin{equation}
  \fR_j(b,d)  = \int_{\Ifo01}\cB_{j-1}(x)\dmu_0(x)  = a_j(b,d)\lambda_j.
\end{equation}
Equation~(181) of Part~1 then yields
\begin{equation}
  \fR_j(b,d)=(-1)^{j-1}b\,c_j(b,d)\lambda_j.
\end{equation}
For $\fL_j(b,d)$, Equation~(432) of Part~2 simplifies to
\begin{equation}
  \fL_j(b,d) = (-1)^{j-1}
  \Bigl(
    \Un_{\{d>0\}}(d)A_j(d)+ \lambda_j\sum_{\substack{1\leq a<b\\a\neq d}}A_j(a)
  \Bigr),
\end{equation}
where $A_j(n)$ is from Equation~(170) of Part~1:
\begin{equation}
  A_j(n) = \int_0^1 \frac{\dnu_j(x)}{(n+x)^j}\;.
\end{equation}
By the first-digit self-similarity of $\nu_j$, applied to the decomposition
of
\begin{equation}
  F_j(b,d)=\int_{b^{-1}}^1\frac{\dnu_j(x)}{x^j}\;
\end{equation}
over the successive intervals of length $b^{-1}$,
\begin{equation}
  F_j(b,d)  =  \Un_{\{d>0\}}(d)(b^{j}-b+1)A_j(d)  +  \sum_{\substack{1\leq a<b\\a\neq d}}A_j(a).
\end{equation}
Hence
\begin{equation}
  \fL_j(b,d)=(-1)^{j-1}\lambda_jF_j(b,d).
\end{equation}
It follows that, for every $k$,
\begin{equation}
  \fL_j(b,d)\lambda_j^{k-1}\fR_j(b,d)
  =b\,c_j(b,d)F_j(b,d)\lambda_j^{k+1}\,.
\end{equation}
This shows in particular how the $k\geq1$ formulas of Theorems~2 in Part~1 and
3 in Part~2 match.

If $d=0$, we know from Theorem~2 of Part~1, that
\begin{equation}\label{eq:d0k0}
  b^{-1}I(b,0,0) = \sum_{j=1}^{\infty}c_j(b,0)F_j(b,0)\lambda_j\,.
\end{equation}
So the modal formula of Part~2 also holds as written with $k=0$ in that case $p=1$, $w=d=0$.

\section{Formal \texorpdfstring{$k=0$}{k=0}: unbordered words with a leading zero}

Let us consider generally an unbordered word $w$ of length $p\geq1$.
Then
$D(t)=1-bt+t^p$, $E(t)=t^p$, and
\begin{equation}
  \lambda_j(w) = \frac{b^{-pj}}{1-b^{1-j}+b^{-pj}}.
\end{equation}
The shortest next-return word is $w$ itself, and
\begin{equation}
  x_w=\frac{n(w)}{b^p-1}.
\end{equation}
Equation~(432) of Part~2 simplifies to
\begin{equation}\label{eq:Ljnoborders}
  \fL_j(b,w)  =  (-1)^{j-1}
  \Bigl(\lambda_j(w)\sum_{a=1}^{b-1}A_j(a) - 
                    \bigl(\lambda_j(w)-1\bigr)
                     \Un_{\{w_1\neq0\}}(w)A_j(n(w))  \Bigr).
\end{equation}

Consider the case with the first digit $w_1=0$.
Equation~\eqref{eq:Ljnoborders} gives
\begin{equation}
  \lambda_j(w)^{-1}\fL_j(b,w)  =  (-1)^{j-1}\sum_{a=1}^{b-1}A_j(a).
\end{equation}
Equation~(362) of Part~2 reduces to
\begin{equation}
  I(b,w,0)=\sum_{a=1}^{b-1}V_0(a), \qquad
  \textrm{with }V_0(n)=\int_{\Ifo01}\frac{\deta_0(x)}{n+x}\;.
\end{equation}
We can assume here $p>1$ as
the $p=1$ case was handled in the previous section. As $w$ is unbordered, it
is not $0^p$, thus $x_w>0$, and all Stieltjes kernels
$1/(n+z)$, $n\geq1$, can be expanded in the modal basis of eigenpolynomials
as per Theorem~4 of \cite{burnolmodal}.
This gives, with absolute convergence,
\begin{equation}
  I(b,w,0)  = 
   \sum_{j=1}^{\infty}  (-1)^{j-1}  \left(\sum_{a=1}^{b-1}A_j(a)\right)  \fR_j(b,w).
\end{equation}
Hence
\begin{equation}\label{eq:k0formal}
  I(b,w,0)  =  \sum_{j=1}^{\infty}  \fL_j(b,w)\lambda_j(w)^{-1}\fR_j(b,w).
\end{equation}
We had reached the same conclusion for $p=1$, $w=d=0$: using formally
$k=0$ in the $k\geq1$ modal series gives an absolutely convergent
representation of $I(b,w,0)$.

For $p>1$, Equations~\eqref{eq:k0formal},
and \eqref{eq:modalKempner} are identical term by term.

\section{Formal \texorpdfstring{$k=0$}{k=0}: unbordered words with a non-zero leading digit}

Suppose that $w$ is an unbordered word with $w_1\neq0$.
We show that the formal use of
the $k\geq1$ modal series for $k=0$ leads to a divergent
series. Equation~(362) of Part~2 gives
\begin{equation}
  I(b,w,0)  =  \sum_{a=1}^{b-1}V_0(a)-V_0(n(w)).
\end{equation}
Observe that we are in a situation with $x_w>0$, so we can expand
in the modal basis each involved kernel $1/(n+z)$, and, using again the
notation from \cite{burnolmodal}
\begin{equation}
  A_j(N) = \int_{0}^1 \frac{\dnu_j(x)}{(N+x)^j}\;,
\end{equation}
we get
\begin{equation}\label{eq:k0unborderedexact}
  I(b,w,0)  =  \sum_{j=1}^{\infty}  (-1)^{j-1}  
    \Bigl( \sum_{a=1}^{b-1}A_j(a)-A_j(n(w))  \Bigr)  \fR_j(b,w),
\end{equation}
with absolute convergence.

On the other hand, division of Equation~\eqref{eq:Ljnoborders} by
$\lambda_j(w)$ shows that the difference between the $j$-th term of the
formal $k=0$ substitution and the $j$-th term of
Equation~\eqref{eq:k0unborderedexact} is
\begin{equation}\label{eq:unborderedextra}
  (-1)^{j-1}  \frac{A_j(n(w))}{\lambda_j(w)}  \fR_j(b,w).
\end{equation}
The quantity $A_j(n(w))$ is the result of applying on the analytic function
$1/(n(w)+z)$, considered in the disk centered at $x_w$ and of radius $R=b^p
x_w=n(w)+x_w\geq1+x_w>\max(x_w,1-x_w)$ (as $n(w)+x_w = b^p x_w$), the
compactly supported distribution $\nu_j^{(j-1)}/(j-1)!$, also denoted
$(-1)^{j-1}\sigma_j$, where, explicitly,
$\sigma_j(f)=\frac{1}{(j-1)!}\int_0^1 f^{(j-1)}(x)\dnu_j(x)$ for any $f\in
C^{j-1}(\Iff01)$.  The geometric series
\begin{equation}\label{eq:geoseries}
  \frac1{n(w)+z}
  = \sum_{m\geq0}(-1)^m R^{-1}\frac{(z-x_w)^m}{R^m}
\end{equation}
converges in the topology of smooth functions on $\Iff01$, so we can apply
termwise the functional $\sigma_j$.  This functional vanishes on all powers
with exponents $<j-1$.  Equations~(320)--(326) of Part~2 show that
\begin{equation}
  \sum_{m\geq j}
  \Bigl|R^{j-1}\sigma_j\bigl(R^{-m}(z-x_w)^m\bigr)\Bigr|=O_{j\to\infty}(\theta_1^j)
\end{equation}
for some $0<\theta_1<1$.  The term $m=j-1$ obtained from Equation~\eqref{eq:geoseries}
after applying $(-1)^{j-1}R^{j-1}\sigma_j$ is $R^{-1}$.  Therefore
\begin{equation}\label{eq:ajasymp}
  (b^p x_w)^{j-1}A_j(n(w))\to_{j\to\infty} \frac1{b^p x_w}\;.
\end{equation}

Let us now study $\fR_j(b,w)$ as $j\to\infty$.  By Equation~(343) of Part~2,
\begin{equation}
  \fR_j(b,w) = \int_{\Ifo01}\cB_{j-1}(x)\deta_0(x) 
  = \sum_{r\in \cR} b^{-|r|}\cB_{j-1}(x(r)).
\end{equation}
Here we are using the tail language $\cR$ and we recall that in the prefix
algebra, $D_w^{-1} = \sum_{r\in \cR} b^{-|r|}P_{r}$, hence on (continuous on
$\Iff01$) functions $(\sD_w^{-1}f)(x) = \sum_{r\in \cR} b^{-|r|}f(\phi_r(x))$,
and in particular,
\begin{equation}
  \fR_j(b,w) = (\sD_w^{-1}\cB_{j-1})(0).
\end{equation}
As $w$ is unbordered, Equation~(412) of Part~2 gives $\sE_w=\sT_w$ (with
$\sT_w(f) = b^{-|w|}f\circ \phi_w$, see Equation~(411)).  From Equation~(413)
one has $\sK_w = \sE_w\sD_w^{-1}$, so the eigenvalue equation gives
\begin{equation}
  b^{-|w|}\bigl(\sD_w^{-1}\cB_{j-1})\circ \phi_w = \lambda_j(w)\cB_{j-1}.
\end{equation}
Now this computation can be done entirely in the finite dimensional space of
polynomials of degrees at most $j-1$.  Evaluating at $-n(w)$, whose image
under $\phi_w$ is $0$, we get
\begin{equation}
  b^{-p} \bigl(\sD_w^{-1}\cB_{j-1}\bigr)(0) = \lambda_j(w)\cB_{j-1}(-n(w)),
\end{equation}
and thus
\begin{equation}
  \fR_j(b,w) = b^p \lambda_j(w)\cB_{j-1}(-n(w)).
\end{equation}

Set as before $R=n(w)+x_w=b^px_w$.
Proposition~10 of Part~2 gives, for some $0<\theta_2<1$,
\begin{equation}
  \Bigl\| R^{-j+1}\cB_{j-1}  - \Bigl(\frac{z-x_w}{R}\Bigr)^{j-1} \Bigr\|_{\cH_R} 
  = O(\theta_2^j).
\end{equation}
Set
\begin{equation}
  Q_j(z)  =  R^{-j+1}\cB_{j-1}(z)  -  \left(\frac{z-x_w}{R}\right)^{j-1}.
\end{equation}
Write
\begin{equation}
  Q_j(z)  =  \sum_{m=0}^{j-2}q_{j,m}  \left(\frac{z-x_w}{R}\right)^m.
\end{equation}
The displayed monomials form the orthonormal basis of $\cH_R$, hence
\begin{equation}
  \|Q_j\|_{\cH_R}^2
  =
  \sum_{m=0}^{j-2}|q_{j,m}|^2.
\end{equation}
Evaluating at $z=-n(w)$ gives $Q_j(-n(w)) =  \sum_{m=0}^{j-2}q_{j,m}(-1)^m$, so 
\begin{equation}
  |Q_j(-n(w))| \leq \sqrt{j-1}\|Q_j\|_{\cH_R} = O(\sqrt{j}\theta_2^j).
\end{equation}
It follows that
\begin{equation}\label{eq:bjasymp}
  \cB_{j-1}(-n(w))  =  (-1)^{j-1}R^{j-1}(1+o(1)).
\end{equation}
On the other hand
\begin{equation}
  b^p\lambda_j(w)R^{j-1}  =  \frac{x_w^{j-1}}{1-b^{1-j}+b^{-pj}}  =  x_w^{j-1}(1+o(1)).
\end{equation}
Hence
\begin{equation}
  \fR_j(b,w)  =  (-1)^{j-1}x_w^{j-1}(1+o(1)).
\end{equation}
We obtain finally
\begin{equation}\label{eq:limit}
  (-1)^{j-1}  \frac{A_j(n(w))}{\lambda_j(w)}  \fR_j(b,w)  \longrightarrow  \frac1{x_w}\;,
\end{equation}
and we can at long last assert that the modal series for $k\geq1$, if used
formally at $k=0$ with $w$ an unbordered word such that $w_1>0$, diverges.

\section{Formal \texorpdfstring{$k=0$}{k=0}: bordered words and language factorizations}

In this section, $w$ is bordered with longest border $v$.

We use subscripts $\cL_v$, $\cG_v$, $\cR_v$, $\cF_v$ for languages associated
with $v$.  We also use such subscripts for $w$ itself. 

Let $g$ be any word
containing at least one occurrence of $v$.  Cut $g$ into $rs$ with $r$ ending
right after the first occurrence of $v$ (our language is a bit ambiguous
because we have called ``occurrence'' the starting index; here of course we
mean that $v$ must be included in $r$ as a suffix). We observe the useful
counting (or rather trimming) lemma:
\begin{equation}\label{eq:trimming}
  k_w(g) = k_w^+(s) = k_w(ws) - 1.
\end{equation}
Indeed, $w=xv$ with $x=w_1\dots w_{p_0}$ and $p_0$ the smallest overlap period of
$w$. So, in the word $ws=xvs$, an occurrence of $w$ starting at any $w_i$ with
$1<i\leq p_0$ would cause an overlap of $w$ with itself longer than
$|v|$. This is excluded, so the starting location must be in $vs$. Either this
starting location is in $s$, or it is in $v$ which means that it creates also
a crossing occurrence in $r\mid s$ (where $\mid$ is the boundary). Conversely
a crossing occurrence in $r\mid s$ cannot start strictly to the left of the
$v$-suffix of $r$ (else the terminal $v$ would not be the first one in $r$). So
there are as many crossing occurrences in $g=r\mid s$ as there are in $w\mid
s$. Thus $k_w(g)$ is the number of crossing occurrences in $w\mid s$ plus the
number of occurrences in $s$. This is what the above equation says.

Suppose now that $g\in \cF_w\setminus \cF_v$. Splitting as above $g$ as $rs$,
we get $k_w^+(s)=0$, which is the definition of the tail language $\cR_w$:
$s\in \cR_w$. Conversely, if $r\in\cL_v$ and $s\in \cR_w$, then $k_w(rs)=
k_w^+(s)=0$ and $rs\in \cF_w$.  Thus:
\begin{equation}
  \cF_w = \cF_v \sqcup \cL_v\cR_w\,.
\end{equation}
Restricting to those words starting with a non-zero digit
\begin{equation}
  \cF_w^+ = \cF_v^+ \sqcup \cL_v^+ \cR_w\,.
\end{equation}
In turn, this implies
\begin{equation}\label{eq:wminusv}
  \begin{split}
    I(b,w,0) - I(b,v,0) &= \sum_{g\in\cL_v^+, r\in \cR_w}\frac1{n(g)b^{|r|} + n(r)}
\\
    &= \sum_{r\in \cR_w} b^{-|r|}f_{\cL_v}(x(r)) = \int_{\Ifo{0}{1}}f_{\cL_v}(x)\deta_0(x).
  \end{split}
\end{equation}
Let $m_v=\min_{g\in\cL_v^+}n(g)$.
The poles of $f_{\cL_v}$ are located at the $-n(g)$, $g\in\cL_v^+$, hence
at distance at least $m_v + x_w$ to $x_w$.
If $|v|\geq2$, then every $g\in\cL_v^+$ has length at least $2$,
and $n(g)\geq b$.
If $v=0$, any $g\in\cL_v^+$ has length at least $2$ and again $n(g)\geq b$.
If $v=d>0$ then $n(g)\geq d\geq1$,
and as $v$ is a border of $w$, $w\neq 0^p$ and $x_w>0$, so
$1+x_w>R_w = \max(x_w,1-x_w)$.
Thus in all cases one can choose a radius $R$ such that
\begin{equation}
  R_w<R<m_v+x_w\,,
\end{equation}
and consider the associated Hardy space and Riesz basis of normalized
eigenpolynomials:  the block-directed Euler--Maclaurin expansion associated with
$w$ applies to $f_{\cL_v}$, exactly as we did earlier with $f_{\cL_w}$.  We
therefore write
\begin{equation}\label{eq:fLv}
  f_{\cL_v}(z)  =  \sum_{j=1}^{\infty}a_j\cB_{j-1}(z),
\end{equation}
where
\begin{equation}
  a_j  =  \frac1{(j-1)!} \int_0^1 f_{\cL_v}^{(j-1)}(x)\dnu_j(x).
\end{equation}
Applying now integration against the measure $\eta_0$, which is associated
with the tail language $\cR_w$, and recalling the definition of the $w$-right
modal coefficients $\fR_j(b,w) = \int_{\Ifo01}\cB_{j-1}(x)\deta_0(x)$, we
get from Equation~\eqref{eq:wminusv}, an absolutely convergent series
\begin{equation}\label{eq:wminusv2}
  I(b,w,0)-I(b,v,0)
  =
  \sum_{j=1}^{\infty}a_j\fR_j(b,w).
\end{equation}

It remains to identify the coefficients $a_j$, and for this we identify a
second factorization related to the language $\cL_v$ (it can be motivated from
looking at the associated generating function of counts per word lengths, but
we go directly to the result):
\begin{equation}\label{eq:Lfac}
  \cL_w=\cL_v\cG_w.
\end{equation}
Indeed, let $g\in\cL_w$, and split $g=rs$ immediately after the first
occurrence of $v$ so that $r\in\cL_v$.  Since $g\in\cL_w$, $k_w(g)=1$ and $w$
is terminal in $g$.  Applying the trimming lemma Equation~\eqref{eq:trimming}
first to $g=rs$, we get $k_w^+(s)=1$. Removing the last digit both in $g$ and
in $s$ (which is possible because $g=r$ is excluded by $g\in \cL_w$) gives
$k_w^+(s[{:}{-}1])=0$.  Thus $s\in\cG_w$.  Conversely, let $r\in\cL_v$ and
$s\in\cG_w$.  We now obtain again from the trimming lemma at start of this
section $k_w(rs)=k_w^+(s)=1$ and $k_w((rs)[{:}{-}1])=k_w^+(s[{:}{-}1])=0$ and
$rs\in\cL_w$.  The factorization is uniquely determined from the cut being
made immediately after the first occurrence of $v$.  Restricting to words with
non-zero first digit, Equation~\eqref{eq:Lfac} gives
\begin{equation}
  \cL_w^+=\cL_v^+\cG_w.
\end{equation}

Consequently, using $n(rg)=b^{|g|}n(r)+n(g)$, we obtain
\begin{equation}\label{eq:Kfac}
  f_{\cL_w}(z)
  =\sum_{g\in\cG_w}b^{-|g|}f_{\cL_v}(\phi_g(z))
  =(\sK_w f_{\cL_v})(z).
\end{equation}
Apply $\sK_w$ to both sides of Equation~\eqref{eq:fLv}, and use the eigenvalue
relation
\begin{equation}
  \sK_w\cB_{j-1}  =  \lambda_j(w)\cB_{j-1}.
\end{equation}
This gives
\begin{equation}
  f_{\cL_w}  =  \sum_{j=1}^{\infty}  a_j\lambda_j(w)\cB_{j-1}.
\end{equation}
The expansion of $f_{\cL_w}$ used for the $k\geq1$ modal
Equation~\eqref{eq:modalIrwin} is
\begin{equation}
  f_{\cL_w}  =  \sum_{j=1}^{\infty}  \fL_j(b,w)\cB_{j-1}.
\end{equation}
Uniqueness therefore gives
\begin{equation}\label{eq:aj}
  a_j=\lambda_j(w)^{-1}\fL_j(b,w),
\end{equation}
and Equation~\eqref{eq:wminusv2} becomes finally
\begin{equation}
  I(b,w,0)-I(b,v,0)  =  \sum_{j=1}^{\infty}  \fL_j(b,w)\lambda_j(w)^{-1}\fR_j(b,w),
\end{equation}
with absolute convergence.
We summarize the last three sections in one last theorem:
\begin{theo}
  Using formally $k=0$ in the modal series Equation~\eqref{eq:modalIrwin}
  which is valid for $k\geq1$ gives:
  \begin{itemize}
  \item a divergent series if $w$ is unbordered and its first digit is not zero,
  \item an absolutely convergent series computing $I(b,w,0)$ if $w$ is unbordered and its first digit is zero,
  \item an absolutely convergent series computing $I(b,w,0)-I(b,v,0)$ if $w$
    is bordered and $v$ is its longest border.
  \end{itemize}
\end{theo}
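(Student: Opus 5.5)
The plan is to assemble the three preceding sections, one per bullet. In each case the target is the formal series $\sum_j \fL_j(b,w)\lambda_j(w)^{-1}\fR_j(b,w)$, obtained by setting $k=0$ in Equation~\eqref{eq:modalIrwin}.

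For the second bullet ($w$ unbordered with $w_1=0$), I will take Equation~\eqref{eq:Ljnoborders} with the indicator vanishing, which gives $\lambda_j(w)^{-1}\fL_j(b,w)=(-1)^{j-1}\sum_{a=1}^{b-1}A_j(a)$. Since $w_1=0$, Equation~(362) of Part~2 reduces $I(b,w,0)$ to $\sum_{a=1}^{b-1}V_0(a)$. If $p=1$, then $w=0$ and Equation~\eqref{eq:d0k0} settles the matter, using the identification $\fL_j\lambda_j^{-1}\fR_j=b\,c_jF_j\lambda_j$ from the $p=1$ comparison section. If $p>1$, unborderedness excludes $w=0^p$, so $x_w>0$. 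The kernels $1/(a+z)$ are then admissible for Theorem~4 of \cite{burnolmodal}. Integrating their modal expansions against $\eta_0$ gives Equation~\eqref{eq:k0formal} with absolute convergence. Equivalently, this is the modal expansion of Equation~\eqref{eq:modalKempner} term by term, since $\mu_0=\eta_0$ for unbordered $w$.

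For the first bullet ($w$ unbordered with $w_1\neq0$), the exact expansion Equation~\eqref{eq:k0unborderedexact} converges absolutely. The formal series differs from it termwise by Equation~\eqref{eq:unborderedextra}, so it suffices to show that this difference does not tend to zero. Three ingredients are needed:
\begin{enumerate}
\item the asymptotic Equation~\eqref{eq:ajasymp} for $A_j(n(w))$, obtained by applying $\sigma_j$ to the geometric expansion of $1/(n(w)+z)$ around $x_w$ with radius $R=b^px_w$;
\item the identity $\fR_j(b,w)=b^p\lambda_j(w)\cB_{j-1}(-n(w))$, which comes from $\sK_w=\sE_w\sD_w^{-1}$ with $\sE_w=\sT_w$, evaluated at $-n(w)$;
\item the approximation $\cB_{j-1}\approx(z-x_w)^{j-1}$ in $\cH_R$ from Proposition~10 of Part~2, giving Equation~\eqref{eq:bjasymp}.
\end{enumerate}
Combining these yields the limit Equation~\eqref{eq:limit}, equal to $1/x_w\neq0$. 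The terms of the formal series then do not tend to zero, so it diverges.

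For the third bullet ($w$ bordered with longest border $v$), I will use the trimming lemma Equation~\eqref{eq:trimming} to obtain $\cF_w^+=\cF_v^+\sqcup\cL_v^+\cR_w$. This gives Equation~\eqref{eq:wminusv}, expressing $I(b,w,0)-I(b,v,0)$ as $\int f_{\cL_v}\,\deta_0$. Next I check that the poles of $f_{\cL_v}$ lie outside a disk of radius $R>R_w$ around $x_w$; this uses $m_v\geq b$ or $m_v\geq1$ together with $x_w>0$. The $w$-directed Euler--Maclaurin expansion then applies to $f_{\cL_v}$ and yields Equation~\eqref{eq:wminusv2}. To identify the coefficients, I use the factorization $\cL_w=\cL_v\cG_w$, which gives $f_{\cL_w}=\sK_wf_{\cL_v}$. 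Uniqueness of expansions in the Riesz basis $\{\cB_{j-1}\}$ then gives $a_j=\lambda_j^{-1}\fL_j$.

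The main obstacle is the divergence in the first bullet. It needs precise asymptotics, not just bounds, for both $A_j(n(w))$ and $\cB_{j-1}(-n(w))$. The point $-n(w)$ lies on the boundary circle of radius $R$, where Hardy-space norm control gives only a $\sqrt{j}$-lossy pointwise estimate. That estimate is harmless only because the error $\theta_2^j$ is geometric. I would rely on Part~2's Equations~(320)--(326) and its Proposition~10 for these two estimates.
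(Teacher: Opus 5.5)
Your proposal is correct and follows the paper's proof essentially step for step. For unbordered $w$ with $w_1=0$ you use the $p=1$ case via Equation~\eqref{eq:d0k0} and the Kempner modal expansion with $\mu_0=\eta_0$; for unbordered $w$ with $w_1\neq0$ you show the termwise difference \eqref{eq:unborderedextra} tends to $1/x_w$, using \eqref{eq:ajasymp}, the identity $\fR_j(b,w)=b^p\lambda_j(w)\cB_{j-1}(-n(w))$, and \eqref{eq:bjasymp} (with the $\sqrt{j}$ boundary loss absorbed by the geometric error); and in the bordered case you use the factorizations $\cF_w^+=\cF_v^+\sqcup\cL_v^+\cR_w$ and $\cL_w=\cL_v\cG_w$, hence $f_{\cL_w}=\sK_w f_{\cL_v}$, together with uniqueness of the modal coefficients.
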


\bigskip
\noindent
  Université de Lille,
  Faculté des Sciences et technologies,
  Département de mathématiques,
  Cité Scientifique,
  F-59655 Villeneuve d'Ascq cedex,
  France.
\newline
\strut \texttt{jean-francois.burnol@univ-lille.fr}

\end{document}